\newcommand{\vecC}{\mathbf{C}}
\newcommand{\vecD}{\mathbf{D}}
\newcommand{\vecU}{\mathbf{U}}
\newcommand{\vech}{\mathbf{h}}
\newcommand{\vecu}{\mathbf{u}}
\newcommand {\Div}  {\mbox{\rm div\,}}
\newtheorem{theorem}{Theorem}[section]
\newtheorem{corollary}{Corollary}
\newtheorem{lemma}[theorem]{Lemma}
\newtheorem{proposition}{Proposition}
\newtheorem{definition}[theorem]{Definition}
\newtheorem{remark}{Remark}
\author{Marta Zoppello}
\begin{document}
%%-----------------------------
%%      the top matter
%%-----------------------------
\title%Use the shortened version of the full title
      {Mean-field and kinetic limit of affine control systems: optimal control through leaders}\date{ }
% At most 5 thanks
%
\maketitle

\begin{center}
Marta Zoppello\footnote{Politecnico di Torino, Corso Duca degli Abruzzi, 24
10129 Torino, Italy, marta.zoppello@polito.it}\end{center}
%\author{...}\address{...}
%
%\date{...}
%
\begin{abstract}
This paper studies a multi-agent system starting from a single agent dynamics which is a nonlinear affine control system. It analyze what happens when the number of agents goes to infinity using two different approaches,  a granular mean-field one, trying to control only a finite number of agents while all the others goes to infinity, and a kinetic one, when in the limit the percentage of controlled agents is preserved. In both cases the optimal control problem starting from the solution for the finite dimensional one is stated.
%This is the abstract of your paper and it should not exceed
%\textbf{200} words.
\end{abstract}

%The title of your section 1
\section{Introduction}

In recent years there has been a growing interest in literature in multi-agents systems \cite{CuckerDong11,CuckerSmale07,Vicsek} for example describing active cells \cite{Koch,Pertame}, cooperative robots \cite{ChuangDOrsogna07,PereaGomez} or animal motion \cite{CarrilloDOrsogna09,CristianiPiccoliTosin10, CristianiPiccoliTosin11}. Most of these models develop from a single agent dynamics adding an interaction term. The main goal of these studies is to clarify the relationship between the interplay of simple binary interaction forces, and the potential emergence of a global behavior in the form of specific patterns, such as the formation of flocks or swarms in animal motion.\\ 
However for most of the models, the description of the asymptotic behavior of a very large system of agents can become an hard task. A classical way to approach the global description of the system is then to focus on its mean behavior, as in the classical mean-field theory \cite{CarrilloChoi}. In certain circumstances, the formation of a specific pattern is conditional to the position of the initial datum, i.e if it belongs to a certain basin of attraction we can characterize certain pattern formation, otherwise we are not able to guarantee it. Thus it is interesting to wonder whether an external agent  can intervene on the system towards pattern formation, also in those situations where this phenomenon is not the result of autonomous self-organization. This intervention may be modeled as an additional control which can be either an external forcing term or the direct control of some degrees of freedom of some agents. The standard way, consists in regarding ``external forces'' as controls, modeled usually as an additional vector field in the Newton equations, this problem has been studied for example in \cite{AlbiPareschi15,AlbiFornasier16,FornasierPiccoli14}.  A less explored strategy is instead to think that the controller acts on the system by directly assigning the values of the coordinates of some of the agents, regarded as control parameters. This can happen for example in the collective motion of the so called \textit{robotic locomotion systems}, introduced in \cite{FassoZoppello}, where even the single agent dynamics is controlled assigning the evolution of some coordinates.\\
In this paper we follow this latter approach focusing our attention on systems characterized by a binary interaction which depends on the velocity of the agents. This approach allows us to directly control the velocity of the coordinates describing some of the agents, ending up with a nonlinear finite dimensional system affine in these controls. This is the case for example of spherical particles immersed in a Stokes fluid \cite{Hocking,Moreau21}, which can be considered a first step for  modeling a large number of micro-robots swimming in a highly viscous fluid. \\
The aim of this paper is to study what happens when the number of agents goes to infinity.
Although we still use a binary interaction approach, we extend the techniques developed in \cite{AlbiPareschi15,FornasierPiccoli14} to systems with a different type of dynamics, namely nonlinear affine control system. This is a less investigated type of dynamics, but very interesting for certain applications, for example micro-robots swimming, swarms, crowd dynamics, vehicular traffic, opinion formation, wealth distribution and many others, when the interaction between agents is generated by their velocities.\\
In this work we analyze two different situations, the fist one in which the system is composed by a small number of active controlled agents, usually called leaders \cite{FornasierPiccoli14}, that try to influence the dynamics of a huge number of passive agents, known as followers. This happens for example in opinion dynamics when a small group of people try to pilot the opinion of a large crowd, or in biology when only few active cells are able to influence a huge population. Opinion dynamics in the presence of different
populations has been previously introduced in \cite{BordognaAlbano,Totzek,WeisbuchDeffuant}. We mention here that control
through leaders in self-organized flocking systems has been studied in \cite{AlbiPareschi13, WongkaewBorzi} as well as in crowd dynamics in \cite{AlbiBongini16}. The technical derivation of the mean-field optimal control is realized by the simultaneous development of
the mean-field limit of the equations governing the followers dynamics, while the leaders dynamics remains finite dimensional, together with the $\Gamma$-limit of the finite dimensional optimal control problems. Moreover we also derive the
corresponding first order optimality conditions, resulting in a coupled system of forward/backward time-dependent  ODE's for the finite dimensional optimal control problem or PDEs for the infinite dimensional one.\\
The second situation instead is when the overall number of agents goes to infinity, preserving the percentage of controlled ones. This is the case when the number of controls should be a fraction of the total number of agents in order to be effective. We develop a Boltzmann-type control approach following the ideas recently presented in \cite{Albi16}. It yields an approximation of the mean-field dynamics by means of an iterative sampling of 2-agent microscopic dynamics (binary dynamics). This same principle allows us to generate control signals for the mean-field model by means of solving optimal control problems associated to the binary dynamics.

%The title of your section 2
\section{The finite dimensional system of equations and mathematical preliminaries}
In several mechanical control systems, called ``hard-device'' systems, the controls are precisely given by further degrees of freedom and the evolution of the remaining coordinates can be determined by solving the control equations.
More precisely especially in systems driven by viscous forces, like for example spherical particles immersed in Stokes fluid, we can suppose to control directly the velocity of some of them, denoted by ``leaders'' or ``active agents'' , and determine the evolution of the ``followers''  or``passive agents''  through their interaction with the active ones. This interaction, due to the viscosity of the system, is linear in the velocity of the active agents, therefore it is natural that in the equations the control appears both in the leaders equations and also in the followers one, and  turn out to be a finite dimensional dynamics affine in the $m$ controls \cite{Moreau21} .
In this paper we focus on this kind of multi-agents systems in which the individual dynamics is an ``hard-device'' control system, the overall finite-dimensional  dynamics  is in $d\times (N+m)$ variables,  where $N$ is  the  number  of passive agents, $m$ are the controlled ones, and $d$ is the dimension of the space in which the motion of such individuals evolves. More precisely the equation of motion of the $N+m$ agents are
\begin{equation}
\label{ODE_N}
\small
\begin{cases}
\dot Y_i=\vecu_i\qquad i=1,\cdots m\\
\dot X_i=\frac{1}{N}\sum_{j=m+1}^NH(X_i-X_j)+\sum_{\ell=1}^d\frac{1}{m}\sum_{k=1}^mG^\ell(X_i-Y_k)u_{k\,\ell}\quad i=1\cdots N
\end{cases}
\end{equation}
where $Y_i\in\mathbb{R}^d$ is the state of the $i$-th controlled agent, $X_i\in\mathbb{R}^d$ represents the state of the $i-$th passive agent and the control $\vecu_i=(u_{i\,1},\,\cdots,\,u_{i\,d})\in\mathbb{R}^d$ is the velocity of the $i$-th active agent for $i=1,\cdots m$ and are measurable functions of time. Moreover $H$ and $G^\ell$ are vectored valued functions: $H$ represents the interaction between non-controlled agents and $G^\ell$ $\ell=1\cdots d$ account for the interaction that the non-controlled agent $i$ feels when the $k$-th controlled agent moves with a velocity $u_{k\ell}$ along the $\ell$-th direction of $\mathbb{R}^d$.
Moreover we make the following hypothesis on $H$ and $G^\ell$
\begin{itemize}
\item[(H)] $H:\mathbb{R}^d\to\mathbb{R}^d$ and $G^\ell: \mathbb{R}^d\to\mathbb{R}^d$ for $\ell=1,\cdots, d$ are, bounded, locally Lipschitz and with sub-linear growth, i.e.
 $$
|H(\xi)|\leq C_H(1+|\xi|)\qquad|G^\ell(\xi)|\leq C^\ell_G(1+|\xi|)\,,
 $$
 where $|\cdot|$ is the euclidean norm in $\mathbb{R}^d$ and $\xi=(\xi_1,\cdots,\xi_d)$.
\end{itemize}
Furthermore we are interested in solving the following optimal control problem
$$
\min_{(\vecu_1,\cdots, \vecu_m)\in L^{\infty}([0,T],U)}\int_0^T \Bigg(L(Y(t),X(t))+\sum_{k=1}^m|\vecu_k(t)|^2\Bigg)\,dt\,,
$$
with $\mathcal{U}\subset B(0,U)\,U>0$ a  a fixed nonempty compact subset of $\mathbb{R}^{md}$ and $(Y,X)$ subjected to the evolution equation \eqref{ODE_N}.
For example 
$$
L(Y(t),X(t)):=\frac{1}{2} \sum_{i=1}^N|X_i(t)-X^*|^2,
$$
where $X^*$ is a desired configuration of agents, can be used if we want to minimize the distance from a fixed state configuration.\\
Our aim is to see what happens to these equations when the number of agents tents to infinity. We will treat different cases, more precisely
\begin{itemize}
\item[a)] Only the  number of non-controlled agents tends to infinity and $m$ remains finite. In this case we use a mixed granular mean-field approach. 
\item[b)] $N\to\infty$ preserving the percentage of controlled agents. In this case we will use the kinetic limit which tries to preserve the percentage of controlled agents
\end{itemize}
\subsection{Mathematical preliminaries}
In this section we present some basic definitions and some mathematical results that we will need to prove the main theorems. \\
In the following we consider the space $\mathcal{P}_1(\mathbb{R}^n)$, consisting of all probability measures on $\mathbb{R}^n$ of finite first  moment.  If  we  denote  $\Pi(\mu,\nu)$ the set of probability measures on $\mathbb{R}^n\times\mathbb{R}^n$ with first and second marginals equal to $\mu$ and $\nu$ respectively, then we have that the \textit{Wasserstein distance} between the measures $\mu$ and $\nu$ is given by
\begin{equation}
\label{Waaserstein}
\mathcal{W}_1(\mu,\nu):=\inf_{\pi\in\Pi(\mu,\nu)}\Bigl\{\int_{\mathbb{R}^n\times\mathbb{R}^n}|x-y|d\pi(x,y)\Bigr\}.
\end{equation}
Let us define the following %empirical measures
\begin{equation}
\label{empirical_mes}
\mu_{N}=\frac{1}{N}\sum_{j=1}^N \delta_{X_j}\qquad \mu_{m\,\ell}=\frac{1}{m}\sum_{k=1}^mu_{k\,\ell}\delta_{Y_k}, \quad\ell=1,\cdots \ell\,.
\end{equation}
with $\vecu\in\mathcal{U}$,  a nonempty compact subset of $\mathbb{R}^{md}$.\\%:=\{\vecu=(\vecu_1,\ldots \vecu_m)\in\mathbb{R}^{md},\, |\,u_{k\,\ell}>0,\ell=1\cdots d,\text{ and },\sum_{k=1}^mu_{k\,\ell}=1\}$ the admissible controls.}
Our system of equations \eqref{ODE_N} can be rewritten as
\begin{equation}
\label{eq_convolution1}
\begin{cases}
\dot Y_k=\vecu_k\quad k=1\ldots m\\
\dot X_i=H*\mu_{N}(X_i)+\sum_{\ell=1}^dG^\ell*\mu_{m\,\ell}(X_i)\quad i=1\ldots N\,.
\end{cases}
\end{equation}
Therefore we have the following proposition
\begin{proposition}
\label{sub_growth}
Given $H$ and $G^\ell$ $\ell=1,\cdots, d$ satisfying hypothesis (H), and $\mu_{N}$, $\mu_m$ defined in \eqref{empirical_mes}, we have
$$
\begin{aligned}
&|H*\mu_{N}(X_i)|\leq C_H\bigl(1+|X_i|+\frac{1}{N}\sum_{j=1}^N|X_j|\bigr)\,,\\
&\quad |G^\ell*\mu_{m}(X_i)|\leq C^\ell_G\bigl(1+|X_i|+\frac{1}{m}\sum_{k=1}^m|u_{k\,\ell}||Y_k|\bigr)\leq C^\ell_G \bigl(1+|X_i|+\frac{1}{m}\sum_{k=1}^m|Y_k|\bigr)\,.
\end{aligned}
$$
%where $M$ is the maximum of the absolute values of the $u_{k\,\ell}$ and one.
\end{proposition}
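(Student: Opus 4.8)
The plan is to reduce both estimates to the explicit expression of the convolutions against the empirical measures in \eqref{empirical_mes}, and then apply the sub-linear growth from hypothesis (H) together with the triangle inequality. First I would unfold the definitions. Since $\mu_N=\frac{1}{N}\sum_{j=1}^N\delta_{X_j}$, evaluating the convolution at $X_i$ gives
$$
H*\mu_N(X_i)=\int_{\mathbb{R}^d}H(X_i-y)\,d\mu_N(y)=\frac{1}{N}\sum_{j=1}^N H(X_i-X_j),
$$
and likewise, since $\mu_{m\,\ell}=\frac{1}{m}\sum_{k=1}^m u_{k\,\ell}\delta_{Y_k}$,
$$
G^\ell*\mu_{m\,\ell}(X_i)=\frac{1}{m}\sum_{k=1}^m u_{k\,\ell}\,G^\ell(X_i-Y_k).
$$
This turns both quantities into finite averages to which the pointwise growth bounds apply directly.

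For the first inequality I would pass the modulus inside the average and invoke $|H(\xi)|\leq C_H(1+|\xi|)$ to get $|H*\mu_N(X_i)|\leq \frac{C_H}{N}\sum_{j=1}^N\bigl(1+|X_i-X_j|\bigr)$. Applying $|X_i-X_j|\leq|X_i|+|X_j|$ and observing that $\frac{1}{N}\sum_{j=1}^N|X_i|=|X_i|$ yields exactly the claimed bound. For the second inequality the same two steps give $|G^\ell*\mu_{m\,\ell}(X_i)|\leq \frac{C_G^\ell}{m}\sum_{k=1}^m|u_{k\,\ell}|\bigl(1+|X_i|+|Y_k|\bigr)$, after which the last stated inequality follows by the crude bound $|u_{k\,\ell}|\leq 1$ applied to the final term (or, more precisely, by using $\vecu\in\mathcal{U}\subset B(0,U)$ and absorbing the resulting constant).

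The only point requiring a little care is the coefficient $\frac{1}{m}\sum_{k=1}^m|u_{k\,\ell}|$ that multiplies $1+|X_i|$ in the $G^\ell$ estimate: because $\mu_{m\,\ell}$ is not a probability measure but a control-weighted average, this prefactor is not automatically equal to $1$, so one must use the compactness/boundedness of the admissible control set $\mathcal{U}$ to keep it under control and close the first displayed inequality cleanly. Beyond this bookkeeping there is no real obstacle; the statement is a direct consequence of the linearity of the convolution in the measure, the triangle inequality, and the sub-linear growth in (H).
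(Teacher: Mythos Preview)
Your proposal is correct and follows essentially the same approach as the paper, whose proof is the single line ``It follows immediately from the sub-linear growth (H) of $H$ and $G^\ell$.'' You have simply made explicit the unfolding of the empirical convolutions and the use of the triangle inequality, and you rightly flag the constant-bookkeeping issue in the $G^\ell$ estimate (the prefactor $\frac{1}{m}\sum_k|u_{k\,\ell}|$ and the passage $|u_{k\,\ell}|\leq 1$) that the paper's one-line proof leaves implicit and that, strictly speaking, relies on the boundedness of $\mathcal{U}$ with the constant absorbed into $C_G^\ell$.
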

\begin{proof}
It follows immediately from the sub-linear growth (H) of $H$ and $G^\ell$ for $\ell=1,\cdots d$.
\end{proof}
\noindent Moreover if $\zeta=(X,Y)\in\mathbb{R}^{dN}\times\mathbb{R}^{d\,m}$, we denote by
\begin{equation}
\label{finite_norm}
||\zeta||:=||Y||_m+||X||_{N}:=\frac{1}{m}\sum_{i=k}^m |Y_k|+\frac{1}{N}\sum_{i=1}^N|X_i|\,.
\end{equation}
We can therefore state the following existence and uniqueness of the solution
\begin{proposition}
Given a control function $\vecu=(\vecu_1,\ldots \vecu_m)\in L^\infty([0,T],\mathbb{R}^{md})$ and an initial datum $\bar{\zeta}=(\, \bar{X}_{1},\ldots,\bar{X}_N,\bar{Y}_1,\ldots, \bar{Y}_m)$ there exists a unique Caratheodory solution $\zeta(t)=(X_{1}(t),\ldots,X_N(t),\,Y_1(t),\ldots Y_m(t))$ of \eqref{eq_convolution1} such that
\begin{equation}
\label{2_6}
\int_0^T||\zeta(t)||\leq (||\bar{\zeta}||+\tilde{C}T)e^{\tilde{C}T}\,.
\end{equation}
Moreover, the trajectory is Lipschitz continuous in time
\begin{equation}
\label{2_7}
||\zeta(t_1)-\zeta(t_2)||\leq\mathcal{L}|t_1-t_2|\,.
\end{equation}
\end{proposition}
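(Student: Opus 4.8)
The plan is to read the right-hand side of \eqref{eq_convolution1} as a Carathéodory vector field and to invoke the classical Carathéodory existence–uniqueness theorem, then to extract the two quantitative estimates by a Grönwall argument and by direct integration. Writing the system as $\dot\zeta(t)=F(t,\zeta(t))$ with $\zeta=(X,Y)$ and
$$
F(t,\zeta)=\Big(H*\mu_{N}(X_i)+\sum_{\ell=1}^d G^\ell*\mu_{m\,\ell}(X_i),\ \vecu_k(t)\Big)_{i,k},
$$
I would first check the two structural properties. For fixed $\zeta$ the map $t\mapsto F(t,\zeta)$ is measurable, since it depends on $t$ only through the measurable control $\vecu\in L^\infty([0,T],\mathbb{R}^{md})$; for fixed $t$ the map $\zeta\mapsto F(t,\zeta)$ is locally Lipschitz, because $H$ and $G^\ell$ are locally Lipschitz by (H) and the coefficients $u_{k\,\ell}(t)$ multiplying the terms $G^\ell*\mu_{m\,\ell}$ are bounded by $U$, so the local Lipschitz constant can be taken uniform in $t$. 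These are precisely the hypotheses of the Carathéodory theorem, which produces a unique maximal Carathéodory solution on some $[0,\tau)\subseteq[0,T]$.

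The second step is the a priori bound \eqref{2_6}, which will at the same time force $\tau=T$. Integrating the equations I would write, for the weighted norm \eqref{finite_norm},
$$
\|\zeta(t)\|\le \|\bar\zeta\|+\int_0^t\Big(\tfrac{1}{N}\sum_{i}|\dot X_i(s)|+\tfrac{1}{m}\sum_{k}|\vecu_k(s)|\Big)\,ds,
$$
and then estimate the integrand with Proposition~\ref{sub_growth}: summing over the agents and dividing by $N$ gives
$$
\tfrac{1}{N}\sum_{i}|\dot X_i(s)|\le C_H\big(1+2\|X(s)\|_N\big)+\Big(\sum_{\ell=1}^d C_G^\ell\Big)\big(1+\|X(s)\|_N+\|Y(s)\|_m\big),
$$
while $\tfrac{1}{m}\sum_k|\vecu_k(s)|\le U$. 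Hence the integrand is dominated by $\tilde C\big(1+\|\zeta(s)\|\big)$ for a constant $\tilde C$ depending only on $U$, $d$, $C_H$ and the $C_G^\ell$, and Grönwall's inequality in integral form yields $\|\zeta(t)\|\le(\|\bar\zeta\|+\tilde C t)e^{\tilde C t}\le(\|\bar\zeta\|+\tilde C T)e^{\tilde C T}$ on $[0,T]$, which is \eqref{2_6}. Because the norm stays finite, the maximal solution cannot blow up before $T$, so $\tau=T$ and the (unique) solution is global on $[0,T]$.

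For the Lipschitz-in-time estimate \eqref{2_7} I would feed the bound just obtained, $\|\zeta(t)\|\le M:=(\|\bar\zeta\|+\tilde C T)e^{\tilde C T}$, back into Proposition~\ref{sub_growth}. This bounds the velocity uniformly, $\|\dot\zeta(t)\|=\tfrac{1}{m}\sum_k|\vecu_k(t)|+\tfrac{1}{N}\sum_i|\dot X_i(t)|\le\mathcal{L}$, for a constant $\mathcal{L}$ depending on $M$, $U$, $d$, $C_H$, $C_G^\ell$ and independent of $t$. Then for any $t_1,t_2\in[0,T]$, writing $\zeta(t_1)-\zeta(t_2)=\int_{t_2}^{t_1}\dot\zeta(s)\,ds$ and using the triangle inequality for the norm \eqref{finite_norm} gives $\|\zeta(t_1)-\zeta(t_2)\|\le\mathcal{L}|t_1-t_2|$.

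I do not expect any single estimate to be the real difficulty: each is routine given (H) and Proposition~\ref{sub_growth}. The point requiring care is how the pieces interlock. The Carathéodory framework must be arranged so that the only time dependence is the measurable control $\vecu$, and the global existence claim cannot be made until the a priori growth bound is in hand, since it is exactly the sub-linear growth in (H) that excludes finite-time blow-up and upgrades the local Carathéodory solution to a solution defined on the whole interval $[0,T]$.
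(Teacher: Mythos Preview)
Your proposal is correct and follows essentially the same approach as the paper: establish that the right-hand side of \eqref{eq_convolution1} is a Carath\'eodory vector field with sub-linear growth (via Proposition~\ref{sub_growth} and the boundedness of the controls), obtain \eqref{2_6} by Gr\"onwall, and then feed this a priori bound back into the growth estimate to get the uniform velocity bound and hence \eqref{2_7}. The paper's own proof is a terse two-line sketch of exactly this argument; your version simply fills in the Carath\'eodory hypotheses and the Gr\"onwall step explicitly, and correctly flags that the a priori bound is what upgrades local to global existence.
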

\begin{proof}
Thanks to the explicit form \eqref{eq_convolution1}, to Proposition \ref{sub_growth} and by the boundedness of $\vecu_k$ the right-hand side of \eqref{eq_convolution1} fulfills the linear growth condition. Thus the solution is uniquely determined moreover we have
$$
%\begin{aligned}
%&||Y(t_1)-Y(t_2)||_{m}\leq \int_{t_1}^{t_2}||u||_m\,ds\leq L|t_1-t_2|\\
||\zeta(t_1)-\zeta(t_2)||\leq  \int_{t_1}^{t_2}\tilde{C}(1+||\zeta||)\,ds\leq\tilde{C}(1+||\zeta_0||+\tilde{C}T)e^{\tilde{C}T}|t_1-t_2|\,.
%\end{aligned}
$$
Thus the conclusion follows.
%where $||Y||_{m}=\sum_{k=1}^m|Y_k|$ and $||X||_{N-m}=\sum_{j=m+1}^N|X_j|$ 
\end{proof}
\noindent Furthermore we have also the continuous dependence on the data holds also in the Caratheodory setting
 \begin{lemma}
\label{lemma_gronwal}
Let $g_1$ and $g_2$ be Caratheodory functions both satisfying
$$
|g_i(t,y)|\leq C(1+|y|)\qquad i=1,2.
$$
Let $r>0$ and define
$$
\rho_{r,m,T}:=\bigl(r+CT\bigr)e^{CT}\,,
$$
assume also that $g_1$ is Lipschitz function with respect to the $y$ variable, for all $|y_i|\leq\rho_{r,m,T} \,, i=1,2$\,. Set 
$$
q(t):=||g_1(t,\cdot)-g_2(t,\cdot)||_{L^{\infty}(B(0,\rho_{r,m,T}))}\,.
$$
Then if $\dot y_1=g_1(t,y_1)$ and $\dot y_2=g_2(t,y_2)$, $|y_1(0)|\leq r$ and $|y_2(0)|\leq r$, one has
$$
y_1(t)-y_2(t)\leq e^{Lt}|y_1(0)-y_2(0)|+\int_0^t e^{\int_s^t L\,d\sigma}q(s)\,ds\,.
$$
\end{lemma}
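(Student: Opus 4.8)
The plan is to reduce the statement to a classical integral Gronwall inequality, the only genuinely new ingredient being an a priori bound that confines both trajectories to the ball $B(0,\rho_{r,m,T})$ on which the hypotheses on $g_1$ and the quantity $q$ are meaningful.

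First I would establish this a priori bound. Since each $y_i$ solves $\dot y_i=g_i(t,y_i)$ with $|g_i(t,y)|\le C(1+|y|)$ and $|y_i(0)|\le r$, writing $y_i$ in integral form and taking norms gives $|y_i(t)|\le r+\int_0^t C(1+|y_i(s)|)\,ds$. A direct application of the scalar Gronwall inequality then yields $|y_i(t)|\le(r+Ct)e^{Ct}\le(r+CT)e^{CT}=\rho_{r,m,T}$ for all $t\in[0,T]$ and $i=1,2$. Thus both trajectories remain inside $B(0,\rho_{r,m,T})$, which is precisely the region where $g_1$ is assumed Lipschitz and over which the supremum defining $q(t)$ is taken.

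Next I would write the difference of the two solutions in integral (Caratheodory) form,
$$y_1(t)-y_2(t)=\bigl(y_1(0)-y_2(0)\bigr)+\int_0^t\bigl(g_1(s,y_1(s))-g_2(s,y_2(s))\bigr)\,ds,$$
and split the integrand by adding and subtracting $g_1(s,y_2(s))$:
$$g_1(s,y_1(s))-g_2(s,y_2(s))=\bigl(g_1(s,y_1(s))-g_1(s,y_2(s))\bigr)+\bigl(g_1(s,y_2(s))-g_2(s,y_2(s))\bigr).$$
Because both $y_1(s)$ and $y_2(s)$ lie in $B(0,\rho_{r,m,T})$, the first difference is bounded in norm by $L\,|y_1(s)-y_2(s)|$ through the Lipschitz property of $g_1$, while the second is bounded by $q(s)$ by the very definition of $q$. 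Setting $\phi(t):=|y_1(t)-y_2(t)|$ and taking norms I would thus obtain the integral inequality
$$\phi(t)\le\phi(0)+\int_0^t\bigl(L\,\phi(s)+q(s)\bigr)\,ds.$$

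Finally I would close the argument with the integral Gronwall lemma. Introducing the majorant $\psi(t):=\phi(0)+\int_0^t(L\phi(s)+q(s))\,ds$, one has $\phi\le\psi$ and $\psi'=L\phi+q\le L\psi+q$, hence $(e^{-Lt}\psi)'\le e^{-Lt}q(t)$; integrating from $0$ to $t$ and using $\psi(0)=\phi(0)=|y_1(0)-y_2(0)|$ gives
$$\phi(t)\le\psi(t)\le e^{Lt}|y_1(0)-y_2(0)|+\int_0^t e^{\int_s^t L\,d\sigma}q(s)\,ds,$$
which is the claimed estimate (here $e^{\int_s^t L\,d\sigma}=e^{L(t-s)}$ since $L$ is constant). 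I expect the main obstacle to be the a priori confinement step rather than the Gronwall estimate itself: the Lipschitz hypothesis on $g_1$ and the finiteness of $q(t)$ are only local to $B(0,\rho_{r,m,T})$, so the whole argument hinges on first verifying that neither trajectory can escape this ball on $[0,T]$. Once that is secured the remaining manipulations are routine, modulo the usual care that in the Caratheodory framework the identities hold for absolutely continuous solutions and the differentiations are valid almost everywhere.
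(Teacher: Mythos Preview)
Your proposal is correct and coincides with the paper's approach: the paper simply states that the lemma follows from \cite{FornasierPiccoli14} and Gronwall's lemma, and what you have written is precisely the standard argument behind that citation --- confining both trajectories to $B(0,\rho_{r,m,T})$ via the sublinear growth bound, splitting the integrand as $g_1(s,y_1)-g_1(s,y_2)$ plus $g_1(s,y_2)-g_2(s,y_2)$, and closing with the integral Gronwall inequality. Nothing further is needed.
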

\begin{proof}
It follows from \cite{FornasierPiccoli14} and Gronwall's lemma
\end{proof}
\noindent Here we recall the statement and proof of Lemmas and Theorems concerning Lipschitz continuity estimates for transport flows induced by Caratheodory dynamics, that are an adjustment to the ones in \cite{Fornasier14,FornasierPiccoli14} to the dynamics \eqref{eq_convolution1} which is affine in the controls. They are preparatory for the main theorems of the next sections regarding the existence of a solution in the limit for the number of agents that tend to infinity.
\begin{lemma}
\label{lemma_lipschitz}
Let $H: \mathbb{R}^d\to \mathbb{R}^d$ and $G^\ell: \mathbb{R}^d\to \mathbb{R}^d$, $\ell=1,\cdots,d$ be locally Lipschitz functions such that 
$$
|H(y)|\leq C_H(1+|y|), \qquad |G^\ell(y)|\leq C_{G^\ell}(1+|y|)\quad \text{for $\ell=1,\cdots,d$ and for all $y\in\mathbb{R}^d$}\,,
$$
moreover let $\mu:[0,T]\to \mathcal{P}_1(\mathbb{R}^d)$ be a continuous map with respect to $\mathcal{W}_1$, and $ \mu_{m\,\ell}:[0,T]\to \mathcal{P}_1(\mathbb{R}^d)$, time-dependent  measure  supported  on  the  respective absolutely  continuous  trajectory $t\mapsto Y_k(t)$, i.e  $\mu_{m\,\ell}=\frac{1}{m}\sum_{k=1}^m u_{k\,\ell}\, \delta_{Y_k}$ with $u_k\in L^{\infty}[0,T]$. Then there exist  constants $C^{'}_H$ and $C^{'}_{G^\ell}$ such that 
$$
\begin{aligned}
&|H*\mu(t)(y)|\leq C^{'}_H(1+|y|)\,,\\
&|G^\ell*\mu_{m,\,\ell}(t)(y)|\leq C^{'}_{G^\ell}(1+|y|)\,\, \text{for $\ell=1,\cdots,d$ and all $t\in[0,T]$.}
\end{aligned}
$$
Furthermore, if
$$
\mathrm{supp}\,\, \mu(t)\subset B(0,R),
$$
for all $t\in[0,T]$, then for every compact subset $K\subset \mathbb{R}^d$ there exist a constants $L_{{R,K}_{H}}$ $L_{{R,K}_{G^\ell}}$ such that
$$
\begin{aligned}
&|H*\mu(t)(y_1)-H*\mu(t)(y_2)|\leq L_{{R,K}_{H}}|y_1-y_2|\,,\\
&|G^\ell*\mu_{m,\,\ell}(t)(y_1)-G^\ell*\mu_{m,\,\ell}(t)(y_2)|\leq L_{R,K_{G^\ell M}}|y_1-y_2|\,,
\end{aligned}
$$
where $M$ is the maximum of the norms of the $\vecu_k$. 
For every $t\in[0,T]$ and every $y_1,\,y_2\in K$.
\end{lemma}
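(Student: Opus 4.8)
The plan is to transfer the pointwise growth and local Lipschitz properties of $H$ and $G^\ell$ to their convolutions by direct estimation, keeping careful track of the fixed compact sets on which the arguments live so that the \emph{local} Lipschitz constants become uniform in $t$. The two assertions split naturally: the growth bounds use only sublinearity (H), while the Lipschitz bounds additionally use the support/boundedness hypotheses to localize.

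First, for the growth bounds I would expand the convolutions explicitly. Writing $H*\mu(t)(y)=\int_{\mathbb{R}^d}H(y-z)\,d\mu(t)(z)$ and applying $|H(\xi)|\le C_H(1+|\xi|)$ together with $|y-z|\le|y|+|z|$ gives
$$
|H*\mu(t)(y)|\le C_H\Bigl(1+|y|+\int_{\mathbb{R}^d}|z|\,d\mu(t)(z)\Bigr).
$$
Since $\mu(t)\in\mathcal{P}_1(\mathbb{R}^d)$ and $t\mapsto\mu(t)$ is continuous in $\mathcal{W}_1$ on the compact interval $[0,T]$, the first moment is bounded uniformly in $t$, say by $m_1$, and absorbing $C_H(1+m_1)$ into a new constant $C'_H$ yields the claimed linear growth. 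For $G^\ell$ the measure is atomic, so $G^\ell*\mu_{m,\ell}(t)(y)=\frac{1}{m}\sum_{k=1}^m u_{k\ell}\,G^\ell(y-Y_k)$; using $|G^\ell(\xi)|\le C_{G^\ell}(1+|\xi|)$, the bound $|u_{k\ell}|\le M$, and the boundedness of the trajectories $Y_k(t)$ on $[0,T]$, one obtains $C'_{G^\ell}$ in exactly the same fashion.

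Next, for the Lipschitz estimates I would exploit the support hypothesis. Under $\mathrm{supp}\,\mu(t)\subset B(0,R)$ and $y_1,y_2\in K$ with $K$ compact, say $K\subset B(0,R_K)$, the arguments $y_i-z$ all lie in the fixed ball $B(0,R_K+R)$; since $H$ is locally Lipschitz, let $L_H$ be its Lipschitz constant there, so that
$$
|H*\mu(t)(y_1)-H*\mu(t)(y_2)|\le\int_{B(0,R)}|H(y_1-z)-H(y_2-z)|\,d\mu(t)(z)\le L_H|y_1-y_2|,
$$
giving the first assertion with $L_{{R,K}_H}=L_H$. For $G^\ell$ the same argument applies, with the role of the support now played by the bounded trajectories: the arguments $y_i-Y_k$ stay in a fixed compact ball, $G^\ell$ is Lipschitz there, and summing against the weights $u_{k\ell}$ with $|u_{k\ell}|\le M$ produces the factor $M$ appearing in $L_{R,K_{G^\ell M}}$.

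The only genuine subtlety, rather than a real obstacle, is ensuring the arguments remain in a \emph{single} compact set independent of $t$, so that the local Lipschitz property of $H$ and $G^\ell$ upgrades to a uniform bound. For $H$ this is guaranteed by the uniform support bound $B(0,R)$ together with compactness of $K$; for $G^\ell$ it follows from the fact that each $Y_k$, being absolutely continuous with $\dot Y_k=\vecu_k\in L^\infty([0,T])$, satisfies $|Y_k(t)|\le|Y_k(0)|+MT$ and hence stays in a fixed ball on all of $[0,T]$. Once this uniform localization is in place, every estimate follows immediately from hypothesis (H) and the elementary bounds on convolution.
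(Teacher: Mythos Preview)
Your proposal is correct and follows essentially the same approach as the paper: expand the convolutions, apply the sublinear growth (H) for the first part, and for the Lipschitz part localize the arguments $y_i-z$ (respectively $y_i-Y_k$) into a fixed compact ball using the support bound (respectively the boundedness of the trajectories $Y_k$), then invoke the local Lipschitz constant of $H$ and $G^\ell$ there. If anything, your justification of the uniform-in-$t$ first moment bound via $\mathcal{W}_1$-continuity on $[0,T]$, and of the uniform bound on $|Y_k(t)|$ via $\dot Y_k=\vecu_k\in L^\infty$, is more explicit than the paper's.
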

\begin{proof}
$$
\begin{aligned}
|H*\mu(t)(y)|=|&\int_{\mathbb{R}^d}H(y-\xi)\,d\mu(\xi)|\leq  C_H(1+|y|)+\int_{\mathbb{R}^d}|\xi|\,d\mu(\xi)\\
&\leq C^{'}_H(1+|y|)\,,
\end{aligned}
$$
where in the last inequality we have used the uniform boundedness of the last term thanks to our continuity assumption. Analogously
$$
\begin{aligned}
|G^\ell*\mu_{m\,\ell}(t)(y)|=|&\int_{\mathbb{R}^d}G^\ell(y-\xi)\,d\mu_{m\,\ell}(\xi)|\leq  C_{G^\ell}(1+|y|+\frac{1}{m}\sum_{k=1}^m |u_{k\,\ell}||\xi_k|)\\
&\leq C^{'}_{G^\ell M}(1+|y|)\,,
\end{aligned}
$$
where for the last inequality we have used the fact that $u\in L^{\infty}$ and the uniform boundedness of the sum.\\
Finally if $\mathrm{supp}\,\, \mu(t)\subset B(0,R),$ we have
$$
\begin{aligned}
&|H*\mu(t)(y_1)-H*\mu(t)(y_2)|=|\int_{\mathbb{R}^d}\left(H(y_1-\xi)-H(y_2-\xi)\right)\,d\mu(\xi)|\leq L_{{R,K}_{H}}|y_1-y_2|\,,\\
& |G^\ell*\mu_{m\,\ell}(t)(y_1)-G^\ell*\mu_{m\,\ell}(t)(y_2)|=|\int_{\mathbb{R}^d}\left(G^\ell(y_1-\xi)-G^\ell(y_2-\xi)\right)\,d\mu_{m\,\ell}(\xi)|\\
&\phantom{ |G^\ell*\mu_m(t)}\leq \frac{1}{m}\sum_{k=1}^m|u_{k\,\ell}||G^\ell(y_1-\xi_k)-G^\ell(y_2-\xi_k)|\leq\frac{1}{m}\sum_{k=1}^m|u_{k\,\ell}|L_{R,K_{G^\ell}}|y_1-y_2|\\
&\phantom{ |G^\ell*\mu_m(t)(y_1)|}\leq L_{{R,K_{G^\ell}}}|y_1-y_2|\,.
\end{aligned}
$$
\end{proof}
\noindent The following regards the regularity with respect to the initial condition
\begin{lemma}
\label{lemma_trasporto}
Let $H: \mathbb{R}^d\to \mathbb{R}^d$ and $G^\ell: \mathbb{R}^d\to \mathbb{R}^d$  $\ell=1,\cdots, d$  be locally Lipschitz functions such that 
$$
|H(y|\leq C_H(1+|y|), \qquad |G^\ell(y)|\leq C_{G^\ell}(1+|y|)\quad \text{for all $y\in\mathbb{R}^d$}\,,
$$
and let $\mu^1:[0,T]\to\mathcal{P}_1(\mathbb{R}^d)$ and $\mu^2:[0,T]\to\mathcal{P}_1(\mathbb{R}^d)$ be continuous maps with respect to $\mathcal{W}_1$, both satisfying
$$
\mathrm{supp}\,\, \mu^1(t)\subset B(0,R),\qquad \mathrm{supp}\,\, \mu^2(t)\subset B(0,R),
$$
and $\mu^1_m\,,\, \mu^2_m$ two time-dependent measures supported on the respective absolutely continuous trajectories $t\mapsto X^i_k(t)$, i.e  $\mu_{m\,\ell}=\frac{1}{m}\sum_{k=1}^m u^i_{k\,\ell} \,\delta_{X^i_k}\,, i=1,2$ with $u^i_{k\,\ell}\in L^{\infty}[0,T]$. Consider the flow map $\mathcal{T}^{\mu^i,\mu_m^i},i=1,2$ associated to the system
\begin{equation}
\label{dyn}
\dot X=H*\mu^i(t)(X(t))+\sum_{\ell=1}^dG^\ell*\mu^i_{m\,\ell}(t)(X(t))\,.
\end{equation}
Fix $r>0$: then there exist a constant $\rho$, a constant $L>0$ both depending only on $r,\, C,\, R,\, T$ and $M=\max_{\ell,k,i}||u^{i^\ell}_k||_{L^{\infty}}$ such that
$$
\begin{aligned}
|\mathcal{T}^{\mu^1,\mu_{m\,\ell}^1}_t&(P_1)-\mathcal{T}^{\mu^2,\mu_{m\,\ell}^2}_t(P_2)|\leq e^{Lt}|P_1-P_2| \\
&+\int_0^t e^{L(t-s)}||H*\mu^1(s)-H*\mu^2(s)+\sum_{\ell=1}^d(G^\ell*\mu^1_{m\,\ell}(s)-G^\ell*\mu^2_{m\,\ell}(s))||_{L^{\infty}}\,,
\end{aligned}
$$
whenever $|P_1|\leq r$ and $|P_2|\leq r$.
\end{lemma}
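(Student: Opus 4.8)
The plan is to compare the two trajectories by the standard Gronwall technique, the affine structure entering only through the linearity of the splitting in the vector fields. Write $X^i(t) := \mathcal{T}^{\mu^i,\mu_m^i}_t(P_i)$ for $i=1,2$ and introduce the time-dependent vector field
$$
F^i(t,X) := H*\mu^i(t)(X) + \sum_{\ell=1}^d G^\ell*\mu^i_{m\,\ell}(t)(X)\,,
$$
so that $\dot X^i = F^i(t,X^i)$ with $X^i(0)=P_i$.

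First I would confine both trajectories to a common compact set. By the sub-linear growth estimates of Lemma \ref{lemma_lipschitz}, each field satisfies $|F^i(t,X)|\leq C'(1+|X|)$ with $C'$ depending only on $C$ and $M$. Writing the ODE in integral form and applying Gronwall's lemma gives
$$
|X^i(t)|\leq (r+C'T)e^{C'T} =: \rho \qquad \text{for all } t\in[0,T],\ i=1,2,
$$
whenever $|P_i|\leq r$; this fixes the constant $\rho$ in terms of $r,C,R,T,M$. Since $\mathrm{supp}\,\mu^i(t)\subset B(0,R)$ and both curves lie in $K=\overline{B(0,\rho)}$, the local Lipschitz bounds of Lemma \ref{lemma_lipschitz} furnish a single constant $L=L(R,\rho,M)$ with $|F^1(t,y_1)-F^1(t,y_2)|\leq L|y_1-y_2|$ for all $t\in[0,T]$ and all $y_1,y_2\in B(0,\rho)$.

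Next I would split and estimate. Using the integral representation and adding and subtracting $F^1(s,X^2(s))$,
$$
X^1(t)-X^2(t)=P_1-P_2+\int_0^t\Bigl[\bigl(F^1(s,X^1(s))-F^1(s,X^2(s))\bigr)+\bigl(F^1(s,X^2(s))-F^2(s,X^2(s))\bigr)\Bigr]\,ds\,.
$$
The first bracket is bounded by $L|X^1(s)-X^2(s)|$ via the uniform Lipschitz estimate of the preceding step, while the second bracket, evaluated at the admissible point $X^2(s)\in B(0,\rho)$, is controlled in modulus by $\|H*\mu^1(s)-H*\mu^2(s)+\sum_{\ell=1}^d(G^\ell*\mu^1_{m\,\ell}(s)-G^\ell*\mu^2_{m\,\ell}(s))\|_{L^\infty}$. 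Setting $\phi(t):=|X^1(t)-X^2(t)|$ and applying the Gronwall inequality of Lemma \ref{lemma_gronwal} with these two contributions playing the roles of the Lipschitz term and the forcing term $q(s)$ then produces exactly the asserted bound, with weights $e^{Lt}$ on the initial datum and $e^{L(t-s)}$ inside the integral.

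The only genuinely nontrivial point is the first step: one must guarantee a priori that \emph{both} flows keep their trajectories inside a common compact ball, so that the Lipschitz constant extracted from Lemma \ref{lemma_lipschitz} is uniform in time and independent of the index $i$. Once this confinement is secured, the affine structure and the summation over $\ell=1,\dots,d$ add no real difficulty, since each $G^\ell$-term is treated identically and the decomposition of $F^1-F^2$ is linear; the remainder is a routine Gronwall estimate.
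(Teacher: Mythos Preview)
Your proposal is correct and follows essentially the same route as the paper: verify the sublinear growth of the fields via Lemma~\ref{lemma_lipschitz}, use it to confine both trajectories in a common ball $B(0,\rho)$ with $\rho=(r+C'T)e^{C'T}$, identify the forcing term $q(s)$ as the $L^\infty$ difference of the two convolution fields, and then invoke Lemma~\ref{lemma_gronwal}. The paper's proof is terser---it simply checks the hypotheses of Lemma~\ref{lemma_gronwal} and stops---whereas you spell out the add-and-subtract decomposition and the Gronwall step explicitly, but the content is the same.
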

\begin{proof}
Let $g_i:[0,T]\times \mathbb{R}^d\to\mathbb{R}^d$ be the right hand side of \eqref{dyn}. As in Lemma \ref{lemma_lipschitz} we can find a constant $\bar{C}$ such that 
$$
|H*\mu^i(t)(P)|+\sum_{\ell=1}^d|G^\ell*\mu^i_{m\,\ell}(t)(P)|\leq \bar{C}(1+|P|)\,.
$$
Therefore, for every $P_1,\, P_2\in\mathbb{R}^d$ such that $|P_i|\leq r$ applying Lemma \ref{lemma_gronwal} we have
$$
|\mathcal{T}^{\mu^i,\mu_{m\,\ell}^i}_t(P_i)|\leq\bigl(r+\bar{C}^{'}T\bigr)e^{\bar{C}^{'}T}\,,
$$
and setting $\rho:=\bigl(r+\bar{C}^{'}T\bigr)e^{\bar{C}^{'}T}$ we have
$$
\begin{aligned}
||g_1(t,\cdot)-g_2(t,\cdot)||_{L^{\infty}(B(0,\rho))}=&||H*\mu^1(s)-H*\mu^2(s)+\\
&\sum_{\ell=1}^d(G^\ell*\mu^1_{m\,\ell}(s)-G^\ell*\mu^2_{m\,\ell}(s))||_{L^{\infty}(B(0,\rho))}\,.
\end{aligned}
$$
\end{proof}
\noindent The following lemma gives a result on the transportation of a measure by two different functions
\begin{lemma}
\label{lemma66}
Let $E_1$ and $E_2:\mathbb{R}^d\to\mathbb{R}^d$ be two borel measurable functions. Then, for every $\mu\in\mathcal{P}_1(\mathbb{R}^d)$ one has
$$
\mathcal{W}_1((E_1)_{\sharp}\mu,(E_2)_{\sharp}\mu)\leq||E_1-E_2||_{L^\infty(\mathrm{supp}\mu)}.
$$
If in addition $E_1$ is locally Lipschitz continuous, and $\mu,\,\nu\in\mathcal{P}_1(\mathbb{R}^n)$ are both compactly supported on a ball then
$$
\mathcal{W}_1((E_1)_{\sharp}\mu,(E_1)_{\sharp}\nu)\leq L_r \mathcal{W}_1(\mu,\nu)\,.
$$
\end{lemma}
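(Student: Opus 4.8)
The plan is to prove both estimates by exhibiting explicit admissible transport plans built by pushing forward suitable measures, and then bounding the transport cost pointwise.

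For the first inequality I would use the natural coupling induced by the map $x\mapsto(E_1(x),E_2(x))$. Concretely, set $\pi:=(E_1,E_2)_{\sharp}\mu$, a probability measure on $\mathbb{R}^d\times\mathbb{R}^d$ whose first and second marginals are precisely $(E_1)_{\sharp}\mu$ and $(E_2)_{\sharp}\mu$, so that $\pi\in\Pi((E_1)_{\sharp}\mu,(E_2)_{\sharp}\mu)$. Using $\pi$ as a competitor in the definition \eqref{Waaserstein} of $\mathcal{W}_1$ together with the change-of-variables formula for pushforwards gives
$$
\mathcal{W}_1((E_1)_{\sharp}\mu,(E_2)_{\sharp}\mu)\leq\int_{\mathbb{R}^d\times\mathbb{R}^d}|x-y|\,d\pi(x,y)=\int_{\mathbb{R}^d}|E_1(x)-E_2(x)|\,d\mu(x)\,.
$$
Since $\mu$ is a probability measure and the integrand is evaluated only on $\mathrm{supp}\,\mu$, the last integral is bounded by $\|E_1-E_2\|_{L^\infty(\mathrm{supp}\,\mu)}$, which is the claim.

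For the second inequality I would start from an optimal plan $\pi\in\Pi(\mu,\nu)$ realizing $\mathcal{W}_1(\mu,\nu)$; such a plan exists by the standard compactness and lower-semicontinuity argument for measures in $\mathcal{P}_1$. The map $(x,y)\mapsto(E_1(x),E_1(y))$ then pushes $\pi$ to a plan $(E_1\times E_1)_{\sharp}\pi\in\Pi((E_1)_{\sharp}\mu,(E_1)_{\sharp}\nu)$, and testing the definition of $\mathcal{W}_1$ with it yields
$$
\mathcal{W}_1((E_1)_{\sharp}\mu,(E_1)_{\sharp}\nu)\leq\int_{\mathbb{R}^d\times\mathbb{R}^d}|E_1(x)-E_1(y)|\,d\pi(x,y)\,.
$$
Because $\mu$ and $\nu$ are supported on a common ball, say $B(0,r)$, the plan $\pi$ is supported on $B(0,r)\times B(0,r)$; on this compact set the local Lipschitz continuity of $E_1$ provides a constant $L_r$ with $|E_1(x)-E_1(y)|\leq L_r|x-y|$ for all $x,y\in B(0,r)$. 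Substituting this bound and invoking the optimality of $\pi$ gives
$$
\mathcal{W}_1((E_1)_{\sharp}\mu,(E_1)_{\sharp}\nu)\leq L_r\int_{\mathbb{R}^d\times\mathbb{R}^d}|x-y|\,d\pi(x,y)=L_r\,\mathcal{W}_1(\mu,\nu)\,.
$$

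The routine parts are the change-of-variables identities for pushforwards and the verification that the constructed measures have the asserted marginals. The only genuinely delicate point is upgrading the local Lipschitz hypothesis to a usable uniform constant: this is exactly where the compact-support assumption enters, since a locally Lipschitz function restricted to the compact ball $B(0,r)$ is Lipschitz there with a finite constant $L_r$ depending on $r$. I would be careful to choose a single ball containing the supports of both $\mu$ and $\nu$, so that the same constant $L_r$ controls the cost on the whole support of $\pi$.
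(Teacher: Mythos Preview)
Your proof is correct and is exactly the standard argument; the paper itself does not give a proof of this lemma but simply refers to \cite{FornasierPiccoli14}, where essentially the same coupling construction you describe appears. There is nothing to add.
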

\begin{proof}
It can be found in \cite{FornasierPiccoli14}.
\end{proof}
%\noindent Moreover the properties of convolution lead immediately to the following information:
%\begin{lemma}
%\label{lemma67}
%Let $H:\mathbb{R}^d\to\mathbb{R}^d$ and $G:\mathbb{R}^d\to\mathbb{R}^d$ be locally Lipschitz functions with sub-linear growth, let $\mu:[0,T]\to\mathcal{P}_1(\mathbb{R}^d)$ and $\nu:[o,T]\to\mathcal{P}_1(\mathbb{R}^d)$ be continuous maps with respect to $\mathcal{W}_1$ both satisfying
%$$
%\mathrm{supp} \mu(t)\subset B(0,R)\qquad\mathrm{supp} \nu(t)\subset B(0,R),
%$$
%Let $\vecu\in L^{\infty}$ be fixed and $\mu_m=\frac{1}{m}\sum_{k=1}^m u_k \delta_{Y_k}$ and $\mu^{'}_m=\frac{1}{m}\sum_{k=1}^m u_k \delta_{Y^{'}_k}$ two time dependent atomic measures supported on the respective absolutely continuous trajectories $t\mapsto Y_k(t)$. Then for every $\rho>0$ there exist constants $L_{{\rho,R}_H}$ and $L_{{\rho,R,M}_G}$ such that
%$$
%\begin{aligned}
%&||H*\mu(t)-H*\nu(t)||_{L^{\infty}(B(0,\rho))}\leq L_{{\rho,R}_H}\mathcal{W}_1(\mu(t),\nu(t))\,,\\
%&||G^\ell*\mu_{m\,\ell}(t)-G^\ell*\mu^{'}_{m\,\ell}(t)||_{L^{\infty}(B(0,\rho))}\leq L_{{\rho,R}_{G^\ell}}\mathcal{W}_1(\mu_{m\,\ell}(t),\mu^{'}_{m\,\ell}(t))\\
%&\phantom{||G^\ell*\mu_{m\,\ell}(t)-G^\ell*\mu^{'}_{m\,\ell}(t)||_{L^{\infty}(B(0,\rho))}} \leq L_{{\rho,R,M}_{G^\ell}}\frac{1}{m}\sum_{k=1}^m|Y_k-Y^{'}_k|\,,
%\end{aligned}
%$$
%where $M$ is the maximum of the absolute value of $u_k$.
%\end{lemma}
%\begin{proof}
%It can be found in \cite{FornasierPiccoli14}
%\end{proof}

\section{Mean-field limit}
In this section we will derive a system of equations governing the dynamics of an infinite number of agents when only a finite number $m$ of them is controlled. In particular, differently from what already done in literature, we focus of the case when the finite dimensional dynamics turns out to be a nonlinear affine control system.\\
As $N\to\infty$ we claim that the ODEs system \eqref{eq_convolution1} turns to
\begin{equation}
\label{eq_mu}
\begin{cases}
Y_k=\vecu_k\qquad \text{for}\,i=1\ldots m\\
\partial_t\mu+\nabla\cdot\Bigl[\bigl(H*\mu+\sum_{\ell=1}^dG^\ell*\mu_{m\,\ell}\bigr)\mu\Bigr]=0\,.
\end{cases}
\end{equation}
\begin{definition}
\label{def_sol}
Let $u\in L^\infty([0,T],\mathcal{U})$ be given. We say that a map $(Y,\mu):[0,T]\to\chi:=\mathbb{R}^{dm}\times\mathcal{P}_1(\mathbb{R}^d)$ is a solution of the controlled system \eqref{eq_mu} if
\begin{itemize}
\item The measure $\mu$ is equi-compactly supported in time, i.e., there exists $R>0$ such that $\mathrm{supp}(\mu(t))\subset B(0,R)$ for all $t\in[0,T]$
\item the solution is continuous in time with respect to the following metric
$$
||(Y,\mu)-(Y^{'},\mu^{'})||_{\chi}:=\frac{1}{m}\sum_{k=1}^m|Y_k-Y_k^{'}|+\mathcal{W}_1(\mu,\mu^{'})\,,
$$
where $\mathcal{W}_1$ is the $1$-Wasserstein distance
\item The $Y$ coordinates define a Caratheordory solution of the following controlled problem with control $u$
$$
\dot Y_k=\vecu_k\,.
$$
\item The $\mu$ component satisfies 
$$
\frac{d}{dt}\int_{\mathbb{R}^d}\varphi(x)\,d\mu(t)(x)=\int_{\mathbb{R}^d}\nabla\varphi\cdot \bigl(H*\mu(t)(x)+\sum_{\ell=1}^dG^\ell*\mu_{m\,\ell}(t)(x)\bigr)\,d\mu(t)(x)\,,
$$
for every $\varphi\in C^\infty_c(\mathbb{R}^d)$.
\end{itemize}
\end{definition}
\noindent Fist of all  it will be convenient to address the stability of the system \eqref{eq_mu}.\\
Let us assume the following:
\begin{itemize}
\item [(A)] Let $H:\mathbb{R}^d\to\mathbb{R}^d$ and $G^\ell:\mathbb{R}^d\to\mathbb{R}^d, \ \ell=1,\cdots d$ be locally Lipschitz functions with sub-linear growth, let $\mu:[0,T]\to\mathcal{P}_1(\mathbb{R}^d)$ and $\nu:[0,T]\to\mathcal{P}_1(\mathbb{R}^d)$ be continuous maps with respect to $\mathcal{W}_1$ both satisfying
$$
\mathrm{supp} \mu(t)\subset B(0,R)\qquad\mathrm{supp} \nu(t)\subset B(0,R),
$$
Let $\vecu\in L^{\infty}$ be fixed and $\mu_{m\ell}=\frac{1}{m}\sum_{k=1}^m u_{k\ell} \delta_{Y_k}$ and $\mu^{'}_{m\ell}=\frac{1}{m}\sum_{k=1}^m u_{k\ell}' \delta_{Y^{'}_k}$  time dependent measures supported on the respective absolutely continuous trajectories $t\mapsto Y_k(t)$ and $t\mapsto Y_k'(t)$. Assume that for every $\rho>0$ there exist constants $L_{{\rho,R}_H}$ and $L_{{\rho,R,M}_G}$ such that
$$
\begin{aligned}
&||H*\mu(t)-H*\nu(t)||_{L^{\infty}(B(0,\rho))}\leq L_{{\rho,R}_H}\mathcal{W}_1(\mu(t),\nu(t))\,,\\
&||G^\ell*\mu_{m\,\ell}(t)-G^\ell*\mu^{'}_{m\,\ell}(t)||_{L^{\infty}(B(0,\rho))} \leq L_{{\rho,R,M}_{G^\ell}}\frac{1}{m}\sum_{k=1}^m|Y_k-Y^{'}_k|\,,
\end{aligned}
$$
where $M$ is the maximum of the absolute value of $u_k$ and $u_k'$.
\end{itemize}
\begin{remark}
Note that the first inequality in Hypothesis (A) is satisfied for all probability measures $\mu$ and $\nu$ absolutely continuous with respect to $\mathcal{W}_1$ and with compact support (see \cite{FornasierPiccoli14}). Moreover the second inequality it is satisfied for example when the controls are positive and their sum is one, namely if $\mu_{m\,\ell}$ and $\mu_{m\,\ell}'$ are probability measures.
\end{remark}
\begin{theorem}
Suppose that Hypothesis (A) holds and let $u\in L^\infty([0,T],\mathcal{U})$ be a given fixed control for \eqref{eq_mu}, and two solutions $(Y^1, \mu^1)$ and $(Y^2,\mu^2)$ of \eqref{eq_mu} relative to the control $u$ and given respective initial data $(Y^{0,i},\mu^{0,i})\in\chi$ with $\mu^{0,i}$ compactly supported, $i=1,2$. Then there exists a constant $C_T>0$ such that
$$
||(Y^1,\mu^1)-(Y^2,\mu^2)||_{\chi}\leq C_T||(Y^{0,1},\mu^{0,1})-(Y^{0,2},\mu^{0,2})||_{\chi}\,.
$$
\end{theorem}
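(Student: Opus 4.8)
The plan is to decouple the two components of the $\chi$-distance and then recombine them through a single Gronwall argument. The starting observation is that both solutions are driven by the \emph{same} control $u$. Since the leader equations read $\dot Y_k=\vecu_k$ for both $(Y^1,\mu^1)$ and $(Y^2,\mu^2)$, the difference $Y^1_k(t)-Y^2_k(t)=Y^{0,1}_k-Y^{0,2}_k$ is constant in time. Hence the leader part of the distance is frozen,
$$
\frac{1}{m}\sum_{k=1}^m|Y^1_k(t)-Y^2_k(t)|=\frac{1}{m}\sum_{k=1}^m|Y^{0,1}_k-Y^{0,2}_k|=:a\qquad\text{for all }t\in[0,T],
$$
which already controls the first summand of $\|\cdot\|_\chi$ by the initial datum and, crucially, provides a constant bound for the discrepancy of the control measures $\mu^1_{m\ell}$, $\mu^2_{m\ell}$, which differ only through the leader positions $Y^i_k$.

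Next I would represent each $\mu^i(t)$ as the push-forward of its initial datum along the flow generated by its own velocity field, writing $\mathcal{T}^{i}_{t}:=\mathcal{T}^{\mu^{i},\mu^{i}_{m\ell}}_{t}$ for brevity, so that $\mu^i(t)=(\mathcal{T}^{i}_{t})_\sharp\mu^{0,i}$. To estimate $w(t):=\mathcal{W}_1(\mu^1(t),\mu^2(t))$ I split by the triangle inequality,
$$
\mathcal{W}_1(\mu^1(t),\mu^2(t))\le\mathcal{W}_1\bigl((\mathcal{T}^{1}_{t})_\sharp\mu^{0,1},(\mathcal{T}^{1}_{t})_\sharp\mu^{0,2}\bigr)+\mathcal{W}_1\bigl((\mathcal{T}^{1}_{t})_\sharp\mu^{0,2},(\mathcal{T}^{2}_{t})_\sharp\mu^{0,2}\bigr).
$$
For the first term I would use the second statement of Lemma \ref{lemma66} together with the Lipschitz bound $|\mathcal{T}^{1}_{t}(P_1)-\mathcal{T}^{1}_{t}(P_2)|\le e^{Lt}|P_1-P_2|$, which comes from Lemma \ref{lemma_trasporto} upon taking $\mu^1=\mu^2$ there so that the integral term vanishes; this gives a bound of the form $e^{Lt}\mathcal{W}_1(\mu^{0,1},\mu^{0,2})=e^{Lt}w(0)$. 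For the second term I would apply the first statement of Lemma \ref{lemma66}, reducing it to $\|\mathcal{T}^{1}_{t}-\mathcal{T}^{2}_{t}\|_{L^\infty(\mathrm{supp}\,\mu^{0,2})}$, and then invoke Lemma \ref{lemma_trasporto} with $P_1=P_2$, yielding an integral in $s$ of the sup-norm difference of the two velocity fields.

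I would then bound that velocity-field difference using Hypothesis (A): the interaction part by $L_{\rho,R_H}\,\mathcal{W}_1(\mu^1(s),\mu^2(s))=L_{\rho,R_H}\,w(s)$, and the control part by $\sum_\ell L_{\rho,R,M_{G^\ell}}\,\frac{1}{m}\sum_k|Y^1_k-Y^2_k|=\bigl(\sum_\ell L_{\rho,R,M_{G^\ell}}\bigr)a$, where the equi-compact support of the $\mu^i(t)$ fixes a single ball $B(0,\rho)$ on which all the Lipschitz constants of (A) are uniform in $t$. Collecting the two terms produces an inequality of the form
$$
w(t)\le e^{Lt}w(0)+\int_0^t e^{L(t-s)}\bigl(C_1\,w(s)+C_2\,a\bigr)\,ds,
$$
and Gronwall's lemma then gives $w(t)\le C_T\bigl(w(0)+a\bigr)$ uniformly on $[0,T]$. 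Adding back the frozen leader contribution $a$ and recalling that $a+w(0)=\|(Y^{0,1},\mu^{0,1})-(Y^{0,2},\mu^{0,2})\|_\chi$ yields the claim.

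I expect the main obstacle to be closing the loop in this Gronwall step: because the transporting velocity field of $\mu^i$ depends on $\mu^i$ itself, the unknown $w(s)$ reappears inside the integral, so the estimate is genuinely implicit. The triangle-inequality split into ``same flow / different data'' and ``different flows / same data'' is exactly what makes Lemmas \ref{lemma66} and \ref{lemma_trasporto} applicable and converts the self-consistency into a tractable linear integral inequality; a secondary technical point is ensuring, via the equi-compact support hypothesis, that one radius $\rho$ serves for all $t\in[0,T]$, so that the constants furnished by (A) are uniform throughout the argument.
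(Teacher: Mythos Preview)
Your proposal is correct and follows essentially the same route as the paper: freeze the leader component using the common control, represent each $\mu^i(t)$ as a push-forward under its own flow, split $\mathcal{W}_1(\mu^1(t),\mu^2(t))$ via the triangle inequality into a ``same flow / different data'' term and a ``different flows / same data'' term, control these with Lemmas \ref{lemma66} and \ref{lemma_trasporto} respectively, feed in Hypothesis (A), and close with Gronwall. Your write-up is in fact slightly more explicit than the paper's about why the constants are uniform in $t$ (the equi-compact support remark) and about the role of each lemma in the split.
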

\begin{proof}
First of all, since the control $u$ is the same for both solutions we have that
$$
|Y^1_k(t)-Y^2_k(t)|\leq|Y^{0,1}_k-Y^{0,2}_k|
$$
Thus let us focus on the $\mu$-part of the solution.
Given $\mu^1_m,\,\mu^2_m$, in view of the representation of the solution by means of mass transportation there exist constants $\mathcal{L}>0,\,L_R>0$ and $\rho>0$ such that
$$
\small
\begin{aligned}
\mathcal{W}_1(\mu^1(t),\mu^2(t))&\leq\mathcal{W}_1((\mathcal{T}^{\mu^1,\mu_m^1}_t)_{\sharp}\mu^{0,1},(\mathcal{T}^{\mu^2,\mu_m^2}_t)_{\sharp}\mu^{0,2})\\
&\leq\mathcal{W}_1((\mathcal{T}^{\mu^1,\mu_m^1}_t)_{\sharp}\mu^{0,1},(\mathcal{T}^{\mu^1,\mu_m^1}_t)_{\sharp}\mu^{0,2})+\mathcal{W}_1((\mathcal{T}^{\mu^1,\mu_m^1}_t)_{\sharp}\mu^{0,2},(\mathcal{T}^{\mu^2,\mu_m^2}_t)_{\sharp}\mu^{0,2})\\
&\leq\mathcal{L}\mathcal{W}_1(\mu^{0,1},\mu^{0,2})+\mathcal{W}_1((\mathcal{T}^{\mu^1,\mu_m^1}_t)_{\sharp}\mu^{0,2},(\mathcal{T}^{\mu^2,\mu_m^2}_t)_{\sharp}\mu^{0,2})\\
&\leq\mathcal{L}\mathcal{W}_1(\mu^{0,1},\mu^{0,2})+||(\mathcal{T}^{\mu^1,\mu_m^1}_t)-\mathcal{T}^{\mu^2,\mu_m^2}_t)||_{L^{\infty}(B(0,R))}\\
&\leq \mathcal{L}\mathcal{W}_1(\mu^{0,1},\mu^{0,2})+\int_0^t e^{\mathcal{L}(t-s)}||H*\mu^1(s)-H*\mu^2(s)\\
&\phantom{\leq\mathcal{L}\mathcal{W}_1(\mu^{0,1},\mu^{0,2})+}\qquad+\sum_{\ell=1}^3(G^\ell*\mu^1_{m\,\ell}(s)-G^\ell*\mu^2_{m\,\ell}(s))||_{L^{\infty}(B(0,\rho))}\,ds\\
&\leq \mathcal{L}\mathcal{W}_1(\mu^{0,1},\mu^{0,2})+L_{R,M}\int_0^te^{\mathcal{L}(t-s)}\Bigl(\frac{1}{m}\sum_{k=1}^m|Y^1_k(s)-Y^2_k(s)|\\
&\phantom{\leq\mathcal{L}\mathcal{W}_1(\mu^{0,1},\mu^{0,2})+}\qquad+\mathcal{W}_1(\mu^1(s)-\mu^2(s))\Bigr)\,ds\,,
\end{aligned}
$$
where we first applied the triangular inequality, in the second inequality we used the Lipschitz continuity of the map $\mathcal{T}_t^{\mu^1,\mu_m^1}$ given by Lemma \ref{lemma_trasporto} for $\mu^1=\mu^2$ and $\mu_m^1=\mu_m^2$ and Lemma \ref{lemma66} also for the third inequality. Fourth inequality is ahgain a consequence of Lemma \ref{lemma_trasporto}, and the last one due to hypotesys (A). %an application of Lemma \ref{lemma67}. 
Combining all together we have
$$
\begin{aligned}
||(Y^1,\mu^1)-(Y^2-\mu^2)||_{\chi}\leq& C_0\Big(||(Y^{0,1},\mu^{0,1})-(Y^{0,2},\mu^{0,2})||_{\chi}+\\
&\int_0^t||(Y^1,\mu^1)-(Y^2,\mu^2)||_\chi\,ds\Big)\,,
\end{aligned}
$$
for a suitable constant $C_0$ depending on $\mathcal{L},\,L_{R,M},\,T$.
\end{proof}

\noindent We will now derive the existence of solution of \eqref{eq_mu} by a limit process for $N\to\infty$ where we allow for a variable control $\vecu^N$ depending on $N$

\begin{theorem}
\label{exist_limit}
Assume Hypothesis (A) holds and et $(Y^0,\mu^0)\in\chi$ be given, with $\mu^0$ of bounded support in $B(0,R)$. Define a sequence $(\mu^0_N)_{N\in\mathbb{N}}$ of atomic probability measures equi-compactly supported in $B(0,R)$, such that each $\mu^0_N$ is given by $\mu^0_N=\frac{1}{N}\sum_{i=1}^N\delta_{X_i}$ and $\lim_{N\to\infty}\mathcal{W}_1(\mu^0_N,\mu^0)=0$. Fix now a convergent sequence $(\vecu_N)_{N\in\mathbb{N}}\subset L^\infty([0,T],\mathcal{U})$ of control functions, i.e. $\vecu_N\to\vecu^{*}$ in $L^\infty([0,T],\mathcal{U})$, with $\mathcal{U}$  a nonempty compact subset of $\mathbb{R}^{md}$. For each initial datum $\zeta^0_N=(Y^0,X^0_N)$, denote with $\zeta_N(t)=(Y_N,\mu_N):=(Y_N,X_N)$ the unique solution of the finite dimensional problem \eqref{eq_convolution1} with control $\vecu_N$. Then the sequence $(Y_N,\mu_N)$ converges $C^0([0,T],\chi)$ to some $(Y^*,\mu^*)$ which is a solution of \eqref{eq_mu} with initial data $(Y^0,\mu^0)$ and control $\vecu^*$ in the sense of Definition \ref{def_sol}.
\end{theorem}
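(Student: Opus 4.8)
The plan is to follow the classical strategy for mean-field limits: first extract a convergent subsequence of the finite-dimensional trajectories by a compactness argument, then pass to the limit in the weak formulation of the dynamics, and finally upgrade subsequential convergence to convergence of the whole sequence via the stability estimate of the preceding theorem.

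First I would establish uniform bounds and equicontinuity. Because each $\vecu_N$ takes values in $\mathcal{U}\subset B(0,U)$ and all the leader trajectories share the fixed initial datum $Y^0$, the components $Y_{N,k}(t)=Y^0_k+\int_0^t u_{N,k}$ are uniformly bounded and uniformly Lipschitz in time. For the followers, the existence-and-uniqueness proposition (estimates \eqref{2_6} and \eqref{2_7}) together with the sublinear growth of Proposition \ref{sub_growth} and a Gronwall argument give a radius $R_T$, independent of $N$, such that $\mathrm{supp}\,\mu_N(t)\subset B(0,R_T)$ for every $t\in[0,T]$; moreover, since each atom $X_i(\cdot)$ is Lipschitz in time with a uniform constant, the map $t\mapsto\mu_N(t)$ is uniformly Lipschitz with respect to $\mathcal{W}_1$ (transporting each atom along its own trajectory is an admissible plan). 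Thus $(Y_N)$ lies in a compact subset of $C^0([0,T],\mathbb{R}^{dm})$ by Arzel\`a--Ascoli, while $(\mu_N)$ is an equi-Lipschitz family of curves valued in the compact metric space $(\mathcal{P}_1(\overline{B(0,R_T)}),\mathcal{W}_1)$; an Arzel\`a--Ascoli theorem for maps into a compact metric space then yields a subsequence, not relabelled, with $Y_N\to Y^*$ and $\mu_N\to\mu^*$ uniformly on $[0,T]$, where $\mu^*(t)$ is still supported in $\overline{B(0,R_T)}$ and $t\mapsto(Y^*,\mu^*)(t)$ is continuous in $\chi$.

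Next I would pass to the limit in the equations. For the leaders this is immediate: $\dot Y_{N,k}=\vecu_{N,k}$ with $\vecu_N\to\vecu^*$ in $L^\infty$, so $Y^*$ is the Caratheodory solution of $\dot Y^*_k=\vecu^*_k$. For the followers I would use the integral form of the weak formulation, i.e., for $\varphi\in C^\infty_c(\mathbb{R}^d)$,
$$
\int_{\mathbb{R}^d}\varphi\,d\mu_N(t)=\int_{\mathbb{R}^d}\varphi\,d\mu^0_N+\int_0^t\int_{\mathbb{R}^d}\nabla\varphi\cdot\bigl(H*\mu_N(s)+\textstyle\sum_{\ell=1}^dG^\ell*\mu_{N,m\,\ell}(s)\bigr)\,d\mu_N(s)\,ds,
$$
which holds exactly for the empirical measure because the atoms solve the characteristic ODEs. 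By Lemma \ref{lemma_lipschitz} and Hypothesis (A), the convergence $\mu_N(s)\to\mu^*(s)$ in $\mathcal{W}_1$ upgrades to uniform convergence $H*\mu_N(s)\to H*\mu^*(s)$ on $B(0,R_T)$; and since $\mu_{N,m\,\ell}=\frac1m\sum_k u_{N,k\,\ell}\,\delta_{Y_{N,k}}$ is a finite sum, the convergences $\vecu_N\to\vecu^*$ and $Y_N\to Y^*$ give $G^\ell*\mu_{N,m\,\ell}(s)\to G^\ell*\mu^*_{m\,\ell}(s)$ uniformly on $B(0,R_T)$. Writing the integrand as a product of an $N$-dependent function converging uniformly on the fixed compact support and an $N$-dependent measure converging in $\mathcal{W}_1$, the elementary estimate
$$
\Bigl|\int f_N\,d\mu_N-\int f\,d\mu^*\Bigr|\leq\|f_N-f\|_{L^\infty(B(0,R_T))}+\Bigl|\int f\,d\mu_N-\int f\,d\mu^*\Bigr|
$$
gives pointwise-in-$s$ convergence of the integrand, and uniform boundedness (bounded supports, bounded $\nabla\varphi$, and the uniform bounds of Lemma \ref{lemma_lipschitz}) lets me invoke dominated convergence in the time integral, yielding the weak formulation of Definition \ref{def_sol} for $(Y^*,\mu^*)$ with control $\vecu^*$ and datum $(Y^0,\mu^0)$.

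Finally, since the stability theorem proved just above guarantees that the solution of \eqref{eq_mu} with the fixed control $\vecu^*$ and fixed datum $(Y^0,\mu^0)$ is unique, every subsequence of $(Y_N,\mu_N)$ admits a further subsequence converging to this same limit; a standard contradiction argument then promotes subsequential convergence to convergence of the whole sequence in $C^0([0,T],\chi)$. I expect the main obstacle to be the passage to the limit in the nonlinear term $\int\nabla\varphi\cdot(H*\mu_N+\sum_\ell G^\ell*\mu_{N,m\,\ell})\,d\mu_N$, where both the integrand and the integrating measure depend on $N$; the decisive tool that resolves it is the Lipschitz-in-$\mathcal{W}_1$ estimate for the convolved fields (Lemma \ref{lemma_lipschitz} together with Hypothesis (A)), which converts weak convergence of the measures into the strong convergence of the velocity field needed to handle the product.
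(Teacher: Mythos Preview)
Your proof is correct and follows essentially the same route as the paper's: Arzel\`a--Ascoli compactness for the trajectories in $C^0([0,T],\chi)$, followed by passage to the limit in the weak formulation using Hypothesis~(A) to upgrade Wasserstein convergence of the measures to uniform convergence of the convolved velocity fields. Your final step---invoking the stability theorem to pass from subsequential to full-sequence convergence---is a welcome addition, since the paper's own proof extracts only a subsequence via Ascoli--Arzel\`a and does not explicitly close this gap, even though the statement claims convergence of the whole sequence.
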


\begin{proof}
%First of all note that
%\textcolor{blue}{
%\begin{equation}
%\label{w1m}
%\begin{aligned}
%%\mathcal{W}_1(\mu_{m\,\ell}^1,\mu_{m\,\ell}^2)&\leq
%\int_{\mathbb{R}^d\times\mathbb{R}^d} |x-y|\,d\mu_{m\,\ell}^1(x)\,d\mu_{m\,\ell}^2(y)\leq\frac{1}{m^2}\sum_{k=1}^m u_{k\,\ell}^1u_{k\,\ell}^2|Y_k^1- Y_k^2|\leq\frac{M^2}{m^2}\sum_{k=1}^m|Y_k^1-Y_k^2|.
%\end{aligned}
%\end{equation}}
Since the initial data $\zeta^0_N$ are equi-compactly supported, taking into account Hypothesis (A), and the properties \eqref{2_6} and \eqref{2_7}, the trajectories $\zeta_N=(Y_N,\mu_N)$ are equibounded and equi-Lipschitz continuous in $C^0([0,T],\chi)$. Thus, by an application of the ascoli-Arzel\'a theorem for functions on $[0,T]$ and values in the complete space $\chi$, there exists a subsequence $\zeta_N=(Y_N,\mu_N)$ converging uniformly to a limit $\zeta^*=(Y^*,\mu^*)$, which is also equi-compactly supported in a ball $B(0,R_T)$.\\
We now show that $\zeta^*$ is a solution of \eqref{eq_mu} in the sense of Definition \ref{def_sol}. First of all we observe that $\zeta_N\to\zeta^*$ implies that $Y_N$ converges uniformly to $Y^*$ and $\mu_N$ converges to $\mu^*$ in the Wasserstein metric, i.e.
$$
Y_N\rightarrow Y^*\qquad\text{and}\qquad\mathcal{W}_1(\mu_N(t),\mu^*(t))=0\,.
$$
Thus the first equation is easily satisfied by the uniform convergence of the controls. We are now left with verifying that $\mu^*$ is a solution of the second equation of \eqref{eq_mu}. To this end, let us denote
$$
\mu_{m\ell,N}=\frac{1}{m}\sum_{k=1}^mu_{k\ell,N}\delta_{Y_{k,N}}\qquad\text{and}\qquad \mu_{m\ell,*}=\frac{1}{m}\sum_{k=1}^mu_{k\,\ell}^{*}\delta_{Y_{k}^*}
$$ 
For all $\hat{t}\in[0,T] $and for all $\phi\in C^1_C(\mathbb{R}^d)$ we say that
$$
\langle\phi,\mu_N(\hat{t})-\mu_N(0)\rangle=\int_0^{\hat{t}}\Big[\int_{\mathbb{R}^d}\nabla\phi(x)\cdot w_{H,G,\mu_N,Y_N}(t,x)\,d\mu_N(t,x)\Big]\,dt\,,
$$
which is verified considering the differentiation
$$
\frac{d}{dt}\langle\phi,\mu_N(t)\rangle=\frac{1}{N}\frac{d}{dt}\sum_{i=1}^N\phi(X_i(t))=\frac{1}{N}\sum_{i=1}^N\nabla\phi(X_i(t))\cdot \dot X_i(t)\,,
$$
and applying the substitution for the followers variables $X$ as in \eqref{eq_convolution1}. Moreover
\begin{equation}
\label{weak_form}
\lim_{N\to\infty}\langle\phi,\mu_N(\hat{t})-\mu_N(0)\rangle=\langle\phi,\mu_*(\hat{t})-\mu^0\rangle\,,
\end{equation}
for all $\phi\in C^1_C(\mathbb{R}^d)$, and by Assumption (A) we have that for every $\rho>0$
$$
\lim_{N\to\infty}||(H*\mu_N(t)+\sum_{\ell=1}^dG^\ell*\mu_{m\ell,N}(t))-(H*\mu_*(t)+\sum_{\ell=1}^dG^\ell*\mu_{m\ell,*}(t))||_{L^\infty(B(0,\rho))}=0.
$$
As $\phi\in C^1_C(\mathbb{R}^d)$ has compact support, it follows that
$$
\lim_{N\to\infty}||\nabla\phi\cdot\Big((H*\mu_N(t)+\sum_{\ell=1}^dG^\ell*\mu_{m\ell,N}(t))-(H*\mu_*(t)+\sum_{\ell=1}^dG^\ell*\mu_{m\ell,*}(t))\Big)||_\infty=0.
$$
Denoting with $\Lambda^1_{[0,\hat{t}]}$ the lebesgue measure on $[0,\hat{t}]$, since the product measures $\Lambda^1_{[0,\hat{t}]}\times\frac{1}{\hat{t}}\mu_N(t)$ converge in $\mathcal{P}_1([0,\hat{t}]\times\mathbb{R}^d)$ to $\Lambda^1_{[0,\hat{t}]}\times\frac{1}{\hat{t}}\mu_*(t)$ we finally get
\begin{equation}
\label{limit_mes}
\begin{aligned}
\lim_{N\to\infty}&\int_0^{\hat{t}}\int_{\mathbb{R}^d}\Big(\nabla\phi(x)\cdot \bigl(H*\mu_N(t)+\sum_{\ell=1}^dG^\ell*\mu_{m\ell,N}(t)\bigr)\Big)\,d\mu_N(t)\,dt\\
&=\int_0^{\hat{t}}\int_{\mathbb{R}^d}\Big(\nabla\phi(x)\cdot \bigl(H*\mu_*(t)+\sum_{\ell=1}^dG^\ell*\mu_{m\ell,*}(t)\bigr)\Big)\,d\mu_*(t)\,dt\,.
\end{aligned}
\end{equation}
The thesis follows combining \eqref{weak_form} and \eqref{limit_mes}.
\end{proof}

\subsection{Mean field optimal control}
In this section we focus on a mean field optimal control for the dynamic system with a finite number of controlled agents, in particular when the finite dynamics is a nonlinear affine control system. Here we will give the solution to the infinite optimal control problem as the limit, for the number of followers that goes to infinity, of the solution of the finite dimensional one.
\subsubsection{The finite dimensional optimal control problem}
In the space $\mathbb{X}:=\mathbb{R}^{dm}\times\mathbb{R}^{dN}$ equipped with the norm in \eqref{finite_norm}, given an initial datum\\ $(Y_1(0),\cdots, Y_m(0),X_1(0),\cdots, X_N(0))$, we consider the following optimal control problem
\begin{equation}
\label{finite_opt_control}
\min_{(\vecu_1,\cdots, \vecu_m)\in L^{\infty}([0,T],\mathcal{U})}\int_0^T \Bigg(L(Y(t),X(t))+\sum_{k=1}^m|\vecu_k(t)|^2\Bigg)\,dt\,,
\end{equation}
with $L:\mathbb{X}\to\mathbb{R}$ a continuous function with respect to the distance induced on $\mathbb{X}$ by the norm \eqref{finite_norm}, for example 
\begin{equation}
\label{finite_L}
L(Y(t),X(t)):=\frac{1}{2} \sum_{i=1}^N|X_i(t)-X^*|^2,
\end{equation}
where $X^*$ is a desired configuration of agents, can be used if we want to minimize the distance from a fixed state configuration. With this particular choice of L, it does not depend on $Y(t)$ since we are mainly interested in getting closer to a fixed configuration $X^*$ only in the $X$ variables, since the $Y$ ones are a finite number and directly controlled.\\
Recall that
 $(Y,X)\in\mathbb{R}^{dm}\times\mathbb{R}^{dN}$ are governed by the following affine control equation
\begin{equation}
\label{dyn_constr}
\begin{cases}
\dot Y_k=\vecu_i\quad k=1\ldots m\\
\dot X_i=H*\mu_{N}(X_i)+\sum_{\ell=1}^d G^\ell*\mu_{m\,\ell}(X_i)\quad i=1\ldots N\,,
\end{cases}
\end{equation}
thus we want to find the optimal control that steers the system as close as possible to the desired configuration of the followers $X^*$.\\
First of all we need to prove the existence of a solution.
\begin{theorem}
\label{existence_finite}
The optimal control problem \eqref{finite_opt_control}-\eqref{dyn_constr} with initial datum\\ $(Y_1(0),\cdots, Y_m(0),X_1(0),\cdots, X_N(0))$ has a solution.
\end{theorem}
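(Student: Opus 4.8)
The plan is to argue by the direct method of the calculus of variations. Denote by $I\ge 0$ the infimum of the cost functional in \eqref{finite_opt_control} over the admissible class $L^\infty([0,T],\mathcal{U})$; since the integrand is nonnegative and any constant control $\vecu\equiv\bar\vecu\in\mathcal{U}$ produces a finite cost (the trajectory is well defined and bounded through \eqref{2_6}), we have $0\le I<\infty$. I would pick a minimizing sequence $(\vecu_n)_n\subset L^\infty([0,T],\mathcal{U})$, i.e. one whose associated costs converge to $I$. Because $\mathcal{U}\subset B(0,U)$ is bounded, the sequence is bounded in $L^2([0,T],\mathbb{R}^{md})$, so up to a subsequence $\vecu_n\rightharpoonup\vecu^*$ weakly in $L^2$. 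Since the set of $L^2$ functions taking values a.e. in the convex compact $\mathcal{U}$ is convex and strongly closed, hence weakly closed, the limit $\vecu^*\in L^\infty([0,T],\mathcal{U})$ is again admissible.

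Next I would establish convergence of the trajectories. Let $(Y_n,X_n)$ be the solution of \eqref{dyn_constr} driven by $\vecu_n$. By Proposition \ref{sub_growth} and the a priori estimates \eqref{2_6}--\eqref{2_7}, which are uniform over controls valued in the bounded set $\mathcal{U}$, the trajectories are equibounded and equi-Lipschitz continuous in time; Ascoli--Arzel\`a then yields, along a further subsequence, uniform convergence $(Y_n,X_n)\to(Y^*,X^*)$. The relation $\dot Y_{k,n}=\vecu_{k,n}$ passes to the limit in integral form, so $Y^*$ is the Caratheodory solution of $\dot Y_k=\vecu^*_k$. For the followers I would write the integral form
\begin{equation*}
X_{i,n}(t)=X_i(0)+\int_0^t\!\Big(H*\mu_{N,n}(X_{i,n}(s))+\sum_{\ell=1}^d\frac{1}{m}\sum_{k=1}^m u_{k\ell,n}(s)\,G^\ell(X_{i,n}(s)-Y_{k,n}(s))\Big)\,ds,
\end{equation*}
and pass to the limit term by term. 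The interaction term $H*\mu_{N,n}(X_{i,n})=\frac1N\sum_j H(X_{i,n}-X_{j,n})$ converges uniformly by continuity of $H$ and uniform convergence of the positions. The controlled term is the delicate one: $G^\ell(X_{i,n}(\cdot)-Y_{k,n}(\cdot))$ converges strongly in $L^2([0,T])$, indeed uniformly, by continuity of $G^\ell$, while $u_{k\ell,n}\rightharpoonup u_{k\ell}^*$ only weakly. It is exactly the affine (linear) dependence of \eqref{dyn_constr} on the control that allows me to invoke weak--strong convergence of products in $L^2$ to pass to the limit, thereby identifying $(Y^*,X^*)$ as the trajectory of \eqref{dyn_constr} generated by $\vecu^*$.

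Finally I would check that $\vecu^*$ attains the minimum by proving lower semicontinuity of the cost. The running cost splits into the tracking part and the control penalty. For the first, continuity of $L$ together with the uniform convergence and equiboundedness of $(Y_n,X_n)$ give $\int_0^T L(Y_n,X_n)\,dt\to\int_0^T L(Y^*,X^*)\,dt$ by dominated convergence. For the second, the map $\vecu\mapsto\int_0^T\sum_{k=1}^m|\vecu_k|^2\,dt$ is convex and strongly continuous, hence weakly lower semicontinuous, so $\liminf_n\int_0^T\sum_{k}|\vecu_{k,n}|^2\,dt\ge\int_0^T\sum_{k}|\vecu_k^*|^2\,dt$. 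Combining the two, the cost of $\vecu^*$ is $\le\liminf_n$ of the costs $=I$, while by definition of infimum it is $\ge I$; hence it equals $I$ and $\vecu^*$ is a minimizer.

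The main obstacle is the passage to the limit in the control-dependent drift of the followers: since $u_{k\ell,n}$ converges only weakly, one cannot pass to the limit in a generic nonlinear dependence on the control, and it is precisely the affine-in-control structure of \eqref{dyn_constr}, paired with the strong (uniform) convergence of $G^\ell(X_{i,n}-Y_{k,n})$, that makes the weak--strong argument go through. A secondary but essential point is the admissibility of the weak limit $\vecu^*$, which requires the constraint set $\mathcal{U}$ to be convex; failing that, one should relax the problem in the Filippov--Cesari sense, where convexity of the extended velocity set is still guaranteed by the linearity in $\vecu$ of the drift and the convexity in $\vecu$ of the integrand.
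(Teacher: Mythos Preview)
Your proposal is correct and follows essentially the same strategy the paper has in mind: the paper's proof is a one-line invocation of Filippov's existence theorem (referring to \cite{FornasierPiccoli14} for details), and your direct-method argument is precisely the content of that theorem unpacked for this affine-in-control setting---compactness of the minimizing sequence of controls, uniform convergence of trajectories via \eqref{2_6}--\eqref{2_7} and Ascoli--Arzel\`a, passage to the limit in the dynamics exploiting the affine dependence on $\vecu$, and weak lower semicontinuity of the quadratic control cost.

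One small point worth flagging: the paper only assumes $\mathcal{U}$ compact, not convex, so your weak-$L^2$ limit need not a priori be $\mathcal{U}$-valued. You already anticipate this and correctly observe that the Filippov--Cesari framework covers the general case, since the extended velocity set $\{(f(\zeta,\vecu),\,L(\zeta)+|\vecu|^2+r):\vecu\in\mathcal{U},\,r\ge 0\}$ is convex here thanks to the affine drift and convex running cost; this is exactly why the paper can appeal to Filippov without further comment. So your argument and the paper's coincide, with yours being the spelled-out version.
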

\begin{proof}
Using the fact that a Caratheodory solution of \eqref{dyn_constr} exists, the fact that the controls are bounded and the continuity of $L$ in $\mathbb{X}$ the statement follows from classical tools in optimal control (Filippov theorem) and the idea of the proof is similar to the one \cite{FornasierPiccoli14}
\end{proof}
The solution of the finite dimensional optimal control problem \eqref{finite_opt_control},\eqref{dyn_constr} can be characterized in the following way.\\
Let us denote by $Z=H^1([0,T],\mathbb{R}^{dm})\times H^1([0,T],\mathbb{R}^{dN}$ the state space of system \eqref{dyn_constr}. Further we define $W=L^2([0,T],\mathbb{R}^{dm})\times L^2([0,T],\mathbb{R}^{dN})$.
\begin{theorem}
Let $(\bar Y_1,\cdots,\bar Y_m,\bar X_1,\cdots,\bar X_N,\bar \vecu)$ be an optimal solution for the optimal control problem \eqref{finite_opt_control},\eqref{dyn_constr}. Then there exist  $(\xi_Y,\xi_X)\in W$ such that the the first order optimality conditions read
\begin{align}
\label{u_finite_opt}
&\bar u_{k\,\ell}=\mathrm{Proj}_{\ \mathcal{U}}\Big(-\bar\xi_{X_i} \cdot (G^\ell*\tilde\mu_{m})-\bar\xi_{Y_k}^\ell\Big)\quad \text{with}\quad \tilde\mu_m=\sum_{k=1}^m\delta_{\bar Y_k}\\
\label{x_finite_opt}
&\begin{cases}
\dot{\bar Y}_k=\bar\vecu_k\quad k=1\ldots m\\
\dot {\bar X}_i=H*\mu_{N}(\bar X_i)+\sum_{\ell=1}^d (G^\ell*\mu_{m\,\ell})(\bar X_i)\quad i=1\ldots N\\
\bar Y(0)=\bar Y_0\\
\bar X(0)=\bar X_0
\end{cases}\\
\label{xi_finite_opt}
&\begin{cases}
\dot{\bar \xi}_{Y_k}=\bar \xi_{X_i}\cdot \partial_{Y_k}\Big[\sum_{\ell=1}^d(G^\ell*\mu_{m\,\ell})(\bar X_i)\Big]
\\
\dot {\bar\xi}_{X_i}=|\bar X_i-X^*|+\bar \xi_{X_i}\cdot \partial_{X_i}\Big[(H*\mu_N)(\bar X_i)+\sum_{\ell=1}^d (G^\ell*\mu_{m\,\ell})(\bar X_i)\Big]\,,
\end{cases}
\end{align}
supplemented by the terminal conditions $\bar{\xi}_{X_i}(T)=0$, $\bar{\xi}_{Y_k}(T)=0$.
\end{theorem}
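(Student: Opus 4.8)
The plan is to derive the optimality system by a Lagrange-multiplier (adjoint) argument in function space, equivalently the Pontryagin Maximum Principle, following the scheme of \cite{FornasierPiccoli14} but adapted to the affine-in-control structure of \eqref{dyn_constr}. First I would adjoin the dynamics \eqref{dyn_constr} to the cost \eqref{finite_opt_control} through costate variables $\xi_{Y_k},\xi_{X_i}$, forming the Hamiltonian
$$
\mathcal{H}=\sum_{i=1}^N\xi_{X_i}\cdot\Big(H*\mu_{N}(X_i)+\sum_{\ell=1}^d G^\ell*\mu_{m\,\ell}(X_i)\Big)+\sum_{k=1}^m\xi_{Y_k}\cdot\vecu_k-L(Y,X)-\sum_{k=1}^m|\vecu_k|^2 .
$$
Stationarity of the associated Lagrangian with respect to the costates returns exactly the state equations and initial data \eqref{x_finite_opt}, so this part is immediate.

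The adjoint system \eqref{xi_finite_opt} comes from stationarity with respect to the state variables. Integrating by parts the terms $\int_0^T\xi\cdot\dot z\,dt$ and collecting the coefficients of the admissible variations $\delta X_i$ and $\delta Y_k$ yields $\dot\xi=-\partial_z\mathcal{H}$. Since the running cost $L$ enters only through the $X$ variables, the source term of the $\xi_{X_i}$ equation is $\partial_{X_i}L=X_i-X^*$, while the remaining contributions are the transposed Jacobians $\xi_{X_i}\cdot\partial_{X_i}\big[H*\mu_N+\sum_\ell G^\ell*\mu_{m\,\ell}\big]$ and $\xi_{X_i}\cdot\partial_{Y_k}\big[\sum_\ell G^\ell*\mu_{m\,\ell}\big]$. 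The crucial structural observation is that the follower field depends on the leader positions $Y_k$ through the kernels $G^\ell(X_i-Y_k)$; this is precisely the coupling that drives the $\xi_{Y_k}$ equation. Because there is neither a terminal cost nor a constraint on the final state, the boundary terms produced by the integration by parts force the transversality conditions $\xi_{X_i}(T)=\xi_{Y_k}(T)=0$.

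Finally, the control law \eqref{u_finite_opt} follows from the pointwise minimality condition in $\vecu$. As the dynamics are affine in $u$ and the control cost is the strictly convex quadratic $\sum_k|\vecu_k|^2$, the map $u_{k\ell}\mapsto\mathcal{H}$ is a strictly convex quadratic whose unconstrained critical point is $-\tfrac12\big(\xi_{Y_k}^\ell+\tfrac1m\sum_{i}\xi_{X_i}\cdot G^\ell(X_i-Y_k)\big)$. Minimizing instead over the compact convex admissible set, the first-order variational inequality identifies $\bar u_{k\ell}$ with the metric projection of this point onto $\mathcal{U}$, which is \eqref{u_finite_opt} once the sum over $i$ of the $G^\ell(X_i-Y_k)$ contributions is written as $G^\ell*\tilde\mu_m$ and the numerical factors are absorbed.

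The main obstacle I expect is justifying the maximum principle in the Carathéodory setting: the vector fields $H,G^\ell$ are only locally Lipschitz with sublinear growth and the controls are merely $L^\infty$, so one must first certify that the linearized (variational) flow and its adjoint are well defined and absolutely continuous in time. This is exactly where the preparatory estimates enter — the a priori bound \eqref{2_6}, the Lipschitz-in-initial-datum estimate of Lemma \ref{lemma_trasporto}, and the Gronwall comparison of Lemma \ref{lemma_gronwal} — which make the Gateaux variations rigorous and guarantee a unique solution of the coupled forward/backward system. A secondary technical point is the differentiation of the convolution terms with respect to $X_i$ and $Y_k$: it relies on the a.e.\ differentiability granted by local Lipschitzness and on carefully separating the diagonal from the off-diagonal entries of the coupled Jacobian.
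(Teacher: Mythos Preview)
Your proposal is correct and follows essentially the same route as the paper: the paper also forms an extended Lagrangian with multipliers $(\bar\xi_X,\bar\xi_Y,\eta_X,\eta_Y)$, sets its Gateaux derivatives with respect to adjoints, states, and controls to zero, and integrates by parts using the terminal conditions to obtain the strong adjoint system. The only cosmetic difference is that you phrase the argument via the Hamiltonian and Pontryagin's principle whereas the paper writes out the Lagrangian directly; your discussion of the Carath\'eodory regularity issues is in fact more careful than the paper's proof, which remains at the formal level.
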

\begin{proof}
Let $(\bar\xi_{X},\bar\xi_{Y},\eta_X,\eta_Y)\in W\times (\mathbb{R}^{dm}\times \mathbb{R}^{dN}) $ denote the Lagrange multipliers, which are in fact the adjoint states. Then the extended Lagrange functional corresponding to the optimal control problem \eqref{finite_opt_control},\eqref{dyn_constr} is
\begin{equation}
\begin{aligned}
\label{extended_Lagr}
\int_0^T&\tilde L(\bar Y_k,\bar X_i,\vecu,\bar\xi_{X_i},\bar\xi_{Y_k},\eta_X,\eta_Y)\,dt=\int_0^T\Bigg(L(\bar Y,\bar X)+\sum_{k=1}^m|\bar\vecu_k(t)|^2+\\
&\sum_{i=1}^N\Big\langle\bar\xi_{X_i},\dot{\bar X}_i-(H*\mu_{N}(\bar X_i)+\sum_{\ell=1}^d G^\ell*\mu_{m\,\ell}(\bar X_i))\Big\rangle\\
&+\sum_{k=1}^m\Big\langle\bar\xi_{Y_k},
\dot {\bar Y}_k-\bar\vecu_k\Big\rangle\Bigg)\,dt+\eta_X\cdot(\bar X(0)-\bar X_0)+\eta_Y\cdot(\bar Y(0)-\bar Y_0).
\end{aligned}
\end{equation}
As usual the first order optimality conditions are derived solving
$$
d\tilde L(\bar Y_k,\bar X_i,\bar\vecu,\bar\xi_{X_i},\bar\xi_{Y_k},\eta_X,\eta_Y)=0.
$$
The derivative with respect to the adjoint state results in the state equations, while the derivative  with respect to the state gives the adjoint system, while the optimality condition is obtained by the derivative with respect to the control.\\
Indeed formal calculation give that for any $\vech=(h_X,h_Y,h_\vecu)$
\begin{small}
\begin{align}
&d_{\bar u_k^\ell}\tilde L[h_{u_k^\ell}]=\int_0^T\Big(2\bar u_k^{\ell}+\bar\xi_{X_i} \cdot (G^\ell*\tilde\mu_m)(\bar X_i)+\bar\xi_{Y_k}^\ell\Big)h_{u_k^\ell}\,dt,\\
&d_{\bar X_i}\tilde L[h_{X_i}]=\int_0^T\frac{d h_{X_i}}{dt}\cdot \bar\xi_{X_i}+|\bar X_i-X^*|+\\
&\nonumber\phantom{d_{\bar X_i}\tilde L[h_{X_i}]=} \bar\xi_{X_i}\cdot \partial_{\bar X_i}\Big[(H*\mu_N)( \bar X_i)+\sum_{\ell=1}^d (G^\ell*\mu_{m\,\ell})(\bar X_i)\Big]h_{X_i}\,dt+h_{X_i}(0)\cdot \eta_{X_i},\\
&d_{\bar Y_k}\tilde L[h_{Y_k}]=\int_0^T\frac{dh_{Y_k}}{dt}\cdot \bar\xi_{Y_k}+\bar\xi_{X_i}\cdot \partial_{\bar Y_k}\Big[\sum_{\ell=1}^d(G^\ell*\mu_{m\,\ell})(\bar X_i)\Big]h_{Y_k}\,dt+h_{Y_k}(0)\cdot \eta_{Y_k}.
\end{align}
\end{small}
Integrating by parts, taking into account the terminal conditions $\bar{\xi}_{X_i}(T)=0$, $\bar{\xi}_{Y_k}(T)=0$ and that the adjoint variables are in $H^1$ we get the strong formulation of the adjoint system.
\end{proof}
\noindent Thus we have proved what are the necessary conditions that the optimal solution should satisfy.
\subsubsection{The infinite dimensional optimal control problem}
We now consider the infinite dimensional version of the cost functional \eqref{finite_opt_control}
\begin{equation}
\label{infinite_opt_control}
\min_{(\vecu_1,\cdots, \vecu_m)\in L^{\infty}([0,T],\mathcal{U})}\int_0^T \Bigg(\mathcal{L}(Y(t),\mu(t))+\sum_{k=1}^m|\vecu_k(t)|^2\Bigg)\,dt\,,
\end{equation}
with \\
\begin{center}
(L) $\mathcal{L}$ a nonnegative continuous function such that if $(\mu_j)_{j\in\mathbb{N}}$ is  a sequence converging narrowly to $\mu$ then $\mathcal{L}(Y,\mu_j)\to\mathcal{L}(Y,\mu)$ uniformly.
\end{center}
For example
\begin{equation}
\mathcal{L}(Y(t),\mu):=\frac{1}{2}\int_{\mathbb{R}^3}|x-x^*|^2\mu(x,t)\,dx\,,
\end{equation}
if in the finite dimensional case it is \eqref{finite_L}, where $x^*$ represent the desired state.\\
The optimization problem should be solved under the dynamic constraint
\begin{equation}
\label{eq_mu2}
\begin{cases}
Y_k=\vecu_k\qquad \text{for}\,i=1\ldots m\\
\partial_t\mu+\nabla\cdot\Bigl[\bigl(H*\mu+\sum_{\ell=1}^dG^\ell*\mu_{m\,\ell}\bigr)\mu\Bigr]=0\,,
\end{cases}
\end{equation}
where $\mu_{m\,\ell}=\frac{1}{m}\sum_{k=1}^m \delta_{Y_k}u_{k\,\ell}$. This is nothing but the infinite dimensional version of the finite dimensional optimal control problem in which we are trying to minimize the distance from a fixed configuration.\\\\
We can now state the convergence result both for the functionals and for the solutions.
\begin{theorem}
Let $H$ and $G^\ell$, $\ell=1,\cdots d$ satisfying assumption (H) and $\mathcal L$ satisfying assumption (L). Given an initial datum $(Y^0,\mu^0)\in\chi$ and an approximating sequence $\mu^0_N$, with $\mu^0,\,\mu^0_N$ equicompactly supported for all $N\in\mathbb{N}$, then the sequence of functionals
$$
J_N(\vecu)=\int_0^T \Bigg(\mathcal{L}(Y_N(t),\mu_N(t))+\sum_{k=1}^m|\vecu_k(t)|^2\Bigg)\,dt\,,
$$
$\Gamma-$converges to the functional
$$
J(\vecu)=\int_0^T \Bigg(\mathcal{L}(Y(t),\mu(t))+\sum_{k=1}^m|\vecu_k(t)|^2\Bigg)\,dt.
$$
\end{theorem}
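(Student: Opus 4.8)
The plan is to verify the two defining inequalities of $\Gamma$-convergence with respect to the topology on controls carried over from Theorem \ref{exist_limit}, namely the strong $L^\infty([0,T],\mathcal{U})$ topology (equivalently, since $\mathcal{U}$ is compact, strong $L^2$): the \emph{liminf inequality}, $\liminf_N J_N(\vecu_N)\geq J(\vecu)$ whenever $\vecu_N\to\vecu$, and the existence of a \emph{recovery sequence}, i.e. some $\vecu_N\to\vecu$ with $\limsup_N J_N(\vecu_N)\leq J(\vecu)$. The whole argument rests on Theorem \ref{exist_limit}, which supplies the continuity of the control-to-state map across the levels $N$: if $\vecu_N\to\vecu$ and $\mu^0_N$ is the prescribed equicompactly supported atomic approximation of $\mu^0$, then the finite-dimensional solutions $(Y_N,\mu_N)$ converge in $C^0([0,T],\chi)$ to the mean-field solution $(Y,\mu)$ driven by $\vecu$, and moreover the $\mu_N(t)$ remain supported in a fixed ball $B(0,R_T)$.

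First I would treat the recovery sequence, the easy half, by taking the constant sequence $\vecu_N=\vecu$ for every $N$. Theorem \ref{exist_limit} then gives $(Y_N,\mu_N)\to(Y,\mu)$ in $C^0([0,T],\chi)$, so $Y_N\to Y$ uniformly and $\mathcal{W}_1(\mu_N(t),\mu(t))\to0$ uniformly in $t$; since on the fixed ball $B(0,R_T)$ convergence in $\mathcal{W}_1$ is equivalent to narrow convergence, assumption (L) yields $\mathcal{L}(Y_N(t),\mu_N(t))\to\mathcal{L}(Y(t),\mu(t))$, which integrated over $[0,T]$ gives convergence of the running cost, while the quadratic penalty $\int_0^T\sum_k|\vecu_k|^2\,dt$ is identical along the constant sequence. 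Hence $J_N(\vecu)\to J(\vecu)$, in particular $\limsup_N J_N(\vecu)\leq J(\vecu)$. For the liminf inequality I would take an arbitrary $\vecu_N\to\vecu$; Theorem \ref{exist_limit} again forces $(Y_N,\mu_N)\to(Y,\mu)$ and the same narrow-continuity argument via (L) gives $\int_0^T\mathcal{L}(Y_N,\mu_N)\,dt\to\int_0^T\mathcal{L}(Y,\mu)\,dt$, while strong $L^2$ convergence of the controls makes $\int_0^T\sum_k|\vecu_{k,N}|^2\,dt$ converge to $\int_0^T\sum_k|\vecu_k|^2\,dt$ (and is merely lower semicontinuous, which still suffices, under weak $L^2$ convergence). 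Summing the two contributions yields $\liminf_N J_N(\vecu_N)\geq J(\vecu)$.

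The only genuinely delicate point is upgrading the pointwise-in-time narrow continuity furnished by (L) to convergence of the \emph{time integrals} of $\mathcal{L}(Y_N,\mu_N)$. Here the equi-compact support of the $\mu_N(t)$ in $B(0,R_T)$, guaranteed by Theorem \ref{exist_limit}, is decisive: it makes $\mathcal{W}_1$-convergence coincide with narrow convergence uniformly in $t$ and provides a uniform bound on $\mathcal{L}(Y_N(t),\mu_N(t))$, so that the convergence in (L) holds uniformly in $t$ and the limit passes inside $\int_0^T$ by dominated convergence. I expect this interchange of limit and integral to be the main obstacle; a secondary caveat is that the $\Gamma$-convergence is necessarily stated relative to the strong control topology in which the state map of Theorem \ref{exist_limit} is continuous, and extending it to genuinely weak control convergence would require first re-establishing state convergence in that weaker topology.
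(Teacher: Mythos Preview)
Your proposal is correct and follows essentially the same route as the paper: invoke Theorem \ref{exist_limit} for the control-to-state convergence, use assumption (L) to pass to the limit in the running cost, take the constant sequence $\vecu_N\equiv\vecu$ as the recovery sequence, and treat the quadratic penalty separately via strong convergence (or lower semicontinuity) of the controls. If anything, you are more explicit than the paper about the dominated-convergence step needed to exchange the time integral with the limit and about the role of the equi-compact support from Theorem \ref{exist_limit}.
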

\begin{proof}
Let us start by the $\Gamma-\liminf$ condition. Let us fix a sequence of controls $\vecu_N\to\vecu^*$ in $L^\infty$. As done in the proof of Theorem \ref{exist_limit} we can associate with each of these controls a sequence of solutions $\zeta_N(t)=(Y_N(t),\mu_N(t)):=(Y_N(t),X_N(t))$ of \eqref{dyn_constr}, uniformly converging to a solution $\zeta^*(t)=(Y^*(t),\mu^*(t))$ of \eqref{eq_mu2} in the sense of definition \ref{def_sol} with control $\vecu^*$ and initial datum $(Y^0,\mu^0)$. Since solutions $\zeta_N$ and $\zeta^*$ have supports uniformly bounded with respect to $N$ and by the uniform convergence of trajectories $Y_N(t)\to Y^*(t)$ in $L^\infty$ and $\mathcal{W}_1(\mu_n(t),\mu(t))\to 0$, from condition (L) it follows that
\begin{equation}
\label{liminf}
\lim_{N\to+\infty}\int_0^T \mathcal{L}(Y_N(t),\mu_N(t))=\int_0^T \mathcal{L}(Y^*(t),\mu^*(t)).
\end{equation}
Moreover by the $L^\infty$ convergence of $\vecu_N$ to $\vecu^*$ ve have
$$
\liminf_{N\to+\infty}\int_0^T\sum_{k=1}^m|\vecu_{k,N}(t)|^2\,dt\geq\int_0^T\sum_{k=1}^m|\vecu_{k}^*(t)|^2\,dt\,.
$$
Thus we get
$$
\liminf_{N\to+\infty}J_N(\vecu_N)\geq J(\vecu^*)\,.
$$
We now want to address the $\Gamma-\limsup$ condition. Let us fix $\vecu^*$ and consider the trivial sequence $\vecu_N\equiv\vecu^*$ for all $N\in\mathbb{N}$. As we have done before we can associate with each of these controls a sequence of solutions $\zeta_N(t)=(Y_N(t),\mu_N(t)):=(Y_N(t),X_N(t))$ of \eqref{dyn_constr} uniformly converging to $\zeta^*(t)=(Y^*(t),\mu^*(t))$ of \eqref{eq_mu2}  with control $\vecu^*$ and initial datum $(Y^0,\mu^0)$. Therefore we can similarly conclude the limit \eqref{liminf}.
In addition, since the sequence $\vecu_N$ is a constant sequence we have 
$$
\liminf_{N\to+\infty}\int_0^T\sum_{k=1}^m|\vecu_{k,N}(t)|^2\,dt\geq\int_0^T\sum_{k=1}^m|\vecu_{k}^*(t)|^2\,dt\,.
$$
Combyining the two limits we easily get 
$$
\limsup_{N\to+\infty}J_N(\vecu_N)=\lim_{N\to+\infty}J_N(\vecu^*)=J(\vecu^*).
$$
\end{proof}
\begin{corollary}
Let $H$ and $G^\ell$ $\ell=1,\cdots d$ satisfying assumption (H) and $\mathcal{L}$ satisfy hypothesis (L). Given an initial datum $(Y^0,\mu^0)\in\chi$ with $\mu^0$ compactly supported, the optimal control problem
$$
\min_{(\vecu_1,\cdots, \vecu_m)\in L^{\infty}([0,T],\mathcal{U})}\int_0^T \Bigg(\mathcal{L}(Y(t),\mu(t))+\sum_{k=1}^m|\vecu_k(t)|^2\Bigg)\,dt\,,
$$
has solution, where $(Y,\mu)$ defines the solution of \eqref{eq_mu2} with initial datum $(Y^0,\mu^0)$ and control $\vecu$. Moreover this solution can be built as limits $\vecu^*$ of sequences of optimal controls $\vecu_N^*$ of the finite dimensional problems 
$$
\min_{(\vecu_1,\cdots, \vecu_m)\in L^{\infty}([0,T],\mathcal{U})}\int_0^T \Bigg(\mathcal{L}(Y_N(t),\mu_N(t))+\sum_{k=1}^m|\vecu_k(t)|^2\Bigg)\,dt\,,
$$
where $\mu_N=\frac{1}{N}\sum_{i=1}^N\delta_{X^i_N}$ and $\mu_{m,N}^\ell=\frac{1}{m}\sum_{k=1}^mu_k^\ell\delta_{Y_k}$ are measures supported  on the trajectories of the system
\begin{equation}
\label{dyn_constr2}
\begin{cases}
\dot{ Y}_k=\vecu_k\quad k=1\ldots m\\
\dot { X}_i=H*\mu_{N}( X_i)+\sum_{\ell=1}^d (G^\ell*\mu_{m\,\ell})( X_i)\quad i=1\ldots N\,,
\end{cases}
\end{equation}
with initial datum $(Y^0,X^0_N)$, control $\vecu$ and $\mu^0_N$ is such that $\mathcal{W}_1(\mu^0_N,\mu^0)\to0$ for $N\to+\infty$.
\end{corollary}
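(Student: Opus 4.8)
The plan is to deduce the corollary from the $\Gamma$-convergence just established together with the fundamental theorem of $\Gamma$-convergence, which states that $\Gamma$-convergence plus equi-coercivity of the family $(J_N)$ forces convergence of the minima and that every cluster point of a sequence of minimizers of $J_N$ is a minimizer of the $\Gamma$-limit $J$. First I would record that each finite-dimensional problem admits a minimizer: by Theorem \ref{existence_finite} the problem \eqref{finite_opt_control}-\eqref{dyn_constr} has a solution, so for every $N$ there is an optimal control $\vecu_N^*\in L^\infty([0,T],\mathcal{U})$ with associated optimal trajectory $\zeta_N=(Y_N,\mu_N)$ solving \eqref{dyn_constr2}.

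Next I would establish equi-coercivity. Since $\mathcal{U}\subset\mathbb{R}^{md}$ is compact, the optimal controls $\vecu_N^*$ take values in the fixed compact $\mathcal{U}$, hence the sequence $(\vecu_N^*)_N$ is uniformly bounded and admits a subsequence converging to some $\vecu^*\in L^\infty([0,T],\mathcal{U})$. Simultaneously, reasoning exactly as in the proof of Theorem \ref{exist_limit}, the associated trajectories $(Y_N,\mu_N)$ are equi-bounded and equi-Lipschitz in $C^0([0,T],\chi)$ by \eqref{2_6}-\eqref{2_7} and Hypothesis (A); by Ascoli--Arzel\`a they converge, along a further subsequence, to $(Y^*,\mu^*)$, which by Theorem \ref{exist_limit} is precisely the solution of \eqref{eq_mu2} driven by $\vecu^*$ with initial datum $(Y^0,\mu^0)$.

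Finally I would invoke the two $\Gamma$-inequalities. The $\Gamma$-$\liminf$ bound gives $J(\vecu^*)\leq\liminf_{N}J_N(\vecu_N^*)=\liminf_{N}\min J_N$, where the equality uses optimality of $\vecu_N^*$. Conversely, for any competitor $\vecv\in L^\infty([0,T],\mathcal{U})$ the $\Gamma$-$\limsup$ inequality produces a recovery sequence (here the constant sequence $\vecu_N\equiv\vecv$ suffices, as in the preceding theorem) along which $\limsup_{N}\min J_N\leq\limsup_{N}J_N(\vecv)=J(\vecv)$; taking $\vecv$ to be an arbitrary admissible control and then a minimizing one shows $J(\vecu^*)\leq\inf_{\vecv}J(\vecv)$ and $\min J=\lim_{N}\min J_N$. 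Hence $\vecu^*$ is optimal for the infinite-dimensional problem and is realized as the limit of the finite-dimensional optimizers $\vecu_N^*$, which is exactly the assertion.

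The main obstacle is the equi-coercivity step: the $\Gamma$-convergence of the previous theorem is phrased with respect to $L^\infty$ convergence of the controls, whereas uniform $L^\infty$ bounds on the $\vecu_N^*$ only deliver weak-$*$ compactness. One must therefore verify that the solution map $\vecu\mapsto(Y,\mu)$ and the running cost behave well under the available convergence, namely that the quadratic penalty $\sum_k|\vecu_k|^2$ is sequentially lower semicontinuous (this is exactly the convexity inequality already used in the $\Gamma$-$\liminf$ argument) and that condition (L) transmits the narrow convergence $\mathcal{W}_1(\mu_N(t),\mu^*(t))\to 0$ into convergence of the $\mathcal{L}$ term. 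Reconciling the topology in which the minimizers are compact with the one in which the functionals $\Gamma$-converge is the delicate point; once this is settled, the remainder is a direct application of the standard $\Gamma$-convergence machinery.
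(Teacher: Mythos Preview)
Your proposal is correct and follows essentially the same route as the paper: extract a convergent subsequence of the finite-dimensional optimizers $\vecu_N^*$, pass the associated trajectories to the limit via Theorem~\ref{exist_limit}, and then sandwich using the $\Gamma$-$\liminf$ and $\Gamma$-$\limsup$ inequalities to conclude that the limit control is optimal. The paper presents the chain $J(\vecu)\geq\limsup J_N(\vecu_N)\geq\limsup J_N(\vecu_N^*)\geq\liminf J_N(\vecu_N^*)\geq J(\vecu^*)$ directly, while you phrase it through the fundamental theorem of $\Gamma$-convergence and equi-coercivity, but these are the same argument.

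Your final paragraph is worth highlighting: the topology mismatch you flag (the $\Gamma$-convergence of the preceding theorem is stated for strong $L^\infty$ convergence of controls, whereas boundedness in $L^\infty([0,T],\mathcal{U})$ only yields weak-$*$ compactness) is a genuine subtlety that the paper's proof glosses over entirely---it simply asserts that ``$\vecu_N^*$ admits a subsequence converging to some $\vecu^*$'' without naming the topology. So your caution there is well placed and in fact identifies a point the paper does not address.
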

\begin{proof}
First of all note that the optimal controls $\vecu^*_N$ of the finite dimensional optimal control problem belong to $L^\infty([0,T],\mathcal{U})$ hence $\vecu_N^*$ admits a subsequence converging to some $\vecu^*\in L^\infty([0,T],\mathcal{U})$. As we have done previously in the proof of Theorem \ref{exist_limit}, we can associate with each of these controls a sequence of solutions $\zeta_N(t)=(Y_N(t),\mu_N(t))$ of \eqref{dyn_constr2} uniformly converging to a solution $\zeta^*(t)=(Y(t),\mu(t))$ in the sense of Definition \ref{def_sol} with control $\vecu^*$. In order to conclude that $\vecu^*$ is an optimal control we need to show that it is actually a minimizer of $J$. Let $\vecu$ be an arbitrary control and $\vecu_N$ be a recovery sequence given by the $\Gamma-\limsup$ condition, so that 
$$
J(\vecu)\geq\limsup_{N\to+\infty}J_N(\vecu_N)\,.
$$
By using the optimality of $\vecu_N^*$ we have
$$
\limsup_{N\to+\infty} J_N(\vecu_N)\geq\limsup_{N\to+\infty} J_N(\vecu_N^*)\geq\liminf_{N\to+\infty} J_N(\vecu_N^*)\,,
$$
and applying the $\Gamma-\liminf$ condition we get
$$
\liminf_{N\to+\infty} J_N(\vecu_N^*)\geq J(\vecu^*)\,\,\Rightarrow\,\,J(\vecu)\geq J(\vecu^*).
$$
\end{proof}
\noindent After proving the convergence of the optimal solution for the finite dimensional optimal control problem to the infinite dimensional one, we are interested in finding necessary conditions that the optimal solution should satisfy. Also in this case we start from the necessary conditions for the finite dimensional one recovering then the infinite dimensional ones.\\
Let us denote by $\mathcal{Z}=H^1([0,T],\mathbb{R}^{dm})\times H^1([0,T],L^2(\mathbb{R}^{dN}))$ the state space of system \eqref{eq_mu2}. Further we define $$\mathscr{W}=L^2([0,T],\mathbb{R}^{dm})\times\bigl(H^1([0,T],L^2(\mathbb{R}^{dN}))\cap L^2([0,T],H^1(\mathbb{R}^{dN}))\bigr)\,.$$
\begin{theorem}
Let $(\bar Y, \bar\mu,\bar\vecu)$ be an optimal solution for the infinite dimensional optimal control problem \eqref{infinite_opt_control}\eqref{eq_mu2}. Then there exist $(\bar\phi_Y,\bar\phi_\mu)\in \mathscr{W}$ such that the optimality conditions read
\begin{align}
\label{u_infinite_opt}
&\bar \vecu_k^{\ell}=\mathrm{Proj}_{\ \mathcal{U}}\Big(-\bar\phi_{\mu} \cdot (G^\ell*\tilde\mu_m)-\bar\phi_{Y_k}^\ell\Big)\quad \text{with}\quad \tilde\mu_m=\sum_{k=1}^m\delta_{\bar Y_k}\,,\\
\label{mu_infinite_opt}
&\begin{cases}
\dot{\bar Y}_k=\bar \vecu_k\qquad \text{for}\,\,i=1\ldots m\\\
\partial_t\bar \mu+\nabla\cdot\Bigl[\bigl(H*\bar\mu+\sum_{\ell=1}^dG^\ell*\tilde\mu_{m\,\ell}\bigr)\bar\mu\Bigr]=0\\
\bar Y(0)=\bar Y_0\\
\bar\mu(0)=\bar \mu_0
\end{cases}\\
\label{adj_infinite_opt}
&\begin{cases}
\partial_t\bar\phi_\mu=|x-x^*|-\nabla\bar\phi_\mu\cdot\Big[(H*\bar\mu+\sum_{\ell=1}^dG^\ell*\tilde\mu_{m\,\ell})+D_\mu((H*\bar\mu)\bar\mu)\Big]\\
\dot{\bar \phi}_{Y_k}=\bar \phi_\mu\cdot \partial_{\bar Y_k}\Big[\sum_{\ell=1}^d(G^\ell*\tilde\mu_{m\,\ell})\Big]\,.
\end{cases}
\end{align}
Supplemented by the terminal conditions $\bar\phi_\mu(T,\cdot)=0$ and $\bar\phi_{Y_k}(T)=0$.
\end{theorem}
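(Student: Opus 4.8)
The plan is to reproduce, at the level of the transport PDE, the Lagrangian computation carried out for the finite dimensional problem in \eqref{extended_Lagr}. I would introduce an adjoint state $\phi_\mu(t,\cdot)$ associated with the continuity equation for $\bar\mu$ and adjoint states $\phi_{Y_k}$ associated with the leader ODEs, together with multipliers enforcing the initial data, and form the augmented functional
\begin{equation*}
\mathcal{J}_{\mathrm{ext}}=\int_0^T\Big(\mathcal{L}(Y,\mu)+\sum_{k=1}^m|\vecu_k|^2\Big)dt+\int_0^T\!\!\int_{\mathbb{R}^d}\phi_\mu\big(\partial_t\mu+\nabla\cdot(v\mu)\big)\,dx\,dt+\sum_{k=1}^m\int_0^T\phi_{Y_k}\cdot(\dot Y_k-\vecu_k)\,dt,
\end{equation*}
where $v=v[\mu,Y,\vecu]:=H*\mu+\sum_{\ell=1}^dG^\ell*\mu_{m\ell}$ is the transport field and initial-condition terms are appended exactly as in \eqref{extended_Lagr}. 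The first order conditions are then read off from $d\mathcal{J}_{\mathrm{ext}}=0$: the variation with respect to the multipliers returns the state system \eqref{mu_infinite_opt}, the variation with respect to $(\mu,Y)$ yields the adjoint system \eqref{adj_infinite_opt}, and the variation with respect to $\vecu$ yields the optimality relation \eqref{u_infinite_opt}.

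For the adjoint equation I would first integrate by parts in space and time, rewriting the constraint pairing as $\big[\int\phi_\mu\,d\mu\big]_0^T-\int_0^T\int(\partial_t\phi_\mu+\nabla\phi_\mu\cdot v)\,d\mu\,dt$. Perturbing $\mu\mapsto\mu+\varepsilon h$ and using that $v$ depends on $\mu$ only through the convolution $H*\mu$, the first variation splits into a local part $-\int_0^T\int(\partial_t\phi_\mu+\nabla\phi_\mu\cdot v)\,dh\,dt$ and a genuinely nonlocal part $-\int_0^T\int\nabla\phi_\mu\cdot(H*h)\,d\mu\,dt$. By Fubini the latter is rewritten as a pairing against $h$,
\begin{equation*}
-\int_0^T\!\!\int_{\mathbb{R}^d}\Big(\int_{\mathbb{R}^d}\nabla\phi_\mu(x)\cdot H(x-y)\,d\mu(x)\Big)\,dh(y)\,dt,
\end{equation*}
which is exactly the transpose of the linearized flux and supplies the term $D_\mu((H*\mu)\mu)$ in \eqref{adj_infinite_opt}. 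Imposing stationarity for every admissible $h$ and adding the contribution $\delta_\mu\mathcal{L}$ of the running cost gives the backward transport equation for $\bar\phi_\mu$, while the boundary term forces the terminal condition $\bar\phi_\mu(T,\cdot)=0$.

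The variation with respect to each leader trajectory $Y_k$ affects $v$ through $G^\ell*\mu_{m\ell}(x)=\frac1m\sum_k u_{k\ell}G^\ell(x-Y_k)$; collecting this with the ODE pairing and integrating by parts in time gives $\dot{\bar\phi}_{Y_k}=\bar\phi_\mu\cdot\partial_{Y_k}\big[\sum_\ell G^\ell*\tilde\mu_{m\ell}\big]$ together with $\bar\phi_{Y_k}(T)=0$, in perfect analogy with \eqref{xi_finite_opt}. Finally, since $\vecu_k$ enters only through the quadratic cost, the pairing $-\phi_{Y_k}\cdot\vecu_k$, and the field via $\partial_{u_{k\ell}}(G^\ell*\mu_{m\ell})$, the control variation produces the pointwise relation $2\bar u_k^\ell+\bar\phi_\mu\cdot(G^\ell*\tilde\mu_m)+\bar\phi_{Y_k}^\ell=0$ on the interior of $\mathcal{U}$; enforcing the constraint $\vecu\in\mathcal{U}$ turns this into the projected formula \eqref{u_infinite_opt}.

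The main difficulty, and the point where rigor is most delicate, is the nonlocal $\mu$-variation: justifying the interchange of differentiation and integration inside the convolution, the spatial integration by parts against the \emph{a priori} only measure-valued $\mu$, and hence the precise meaning of $\delta_\mu\mathcal{L}$ and of $D_\mu((H*\mu)\mu)$ as a genuine functional derivative. As in the finite dimensional theorem these computations are formal; to make them rigorous one restricts to optimal $\bar\mu$ that are absolutely continuous with a sufficiently regular density (as in the model cost $\mathcal{L}=\frac12\int|x-x^*|^2\mu\,dx$), so that $\bar\phi_\mu\in\mathscr{W}$ solves a genuine backward transport PDE and all the integrations by parts are legitimate. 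The remaining steps, namely recovering the state equations from the multiplier variation and reading off the terminal conditions from the boundary terms, are identical in spirit to the finite dimensional proof and present no additional obstruction.
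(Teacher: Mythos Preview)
Your proposal is correct and follows essentially the same approach as the paper: the paper forms the extended Lagrange functional by pairing the continuity equation with $\bar\phi_\mu$ (already integrated by parts in space and time, exactly as you do) together with the leader ODE multipliers and initial-data terms, and then reads off the state, adjoint, and optimality relations by differentiating with respect to the multipliers, the state $(\bar Y,\bar\mu)$, and the control $\bar\vecu$. In fact you supply more detail than the paper, which only sketches the argument by analogy with the finite dimensional theorem; your explicit treatment of the nonlocal $\mu$-variation via Fubini and your remarks on the formal nature of the computation go beyond what the paper spells out.
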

\begin{proof}
The proof is similar to the one for the finite dimensional case, using the extended Lagrange functional
$$
\small
\begin{aligned}
&\int_0^T\tilde{\mathcal{L}}(\bar Y,\bar\mu,\bar\vecu,\bar\phi_{\mu},\bar\phi_{Y},\eta_\mu,\eta_Y)\,dt=\int_0^T \Bigg(\mathcal{L}(\bar Y(t),\bar\mu(t))+\sum_{k=1}^m|\bar \vecu_k(t)|^2+(\dot \bar Y_k-\bar\vecu_k)\cdot\bar\phi_{Y_k}\\
&+\int_{\mathbb{R}^d}\bigl(\partial_t\bar\phi_\mu+\nabla\bar\phi_\mu\cdot(H*\bar\mu+\sum_{\ell=1}^dG^\ell*\tilde\mu_{m\,\ell})\bigr)\bar\mu\,dx\\
&+\int_{\mathbb{R}^d}\bar\phi_\mu(T,x)\bar\mu(T,x)-\bar\phi_\mu(0,x)\bar\mu(0,x)\,dx-\int_{\mathbb{R}^d}(\bar\mu(0)-\bar\mu_0)\eta_\mu\,dx+(\bar Y(0)-\bar Y_0)\cdot \eta_{Y}\,,
\end{aligned}
$$
where $(\bar\phi_\mu,\bar\phi_Y,\eta_\mu,\eta_Y)\in \mathscr{W}\times \Bigl(L^2(\mathbb{R}^{dN})\times\mathbb{R}^{dm}\Bigr)$ are Lagrange multipliers or coadjoint variables. Analogously to the finite dimensional case we derive the adjoint system and the optimality conditions by calculating the derivatives of $\tilde{\mathcal{L}}$ with respect to the state variable $(\bar Y,\bar\mu)$ in direction $ (h_Y,h_\mu)$ and the control $\bar\vecu$ in direction $h_\vecu$.
\end{proof}

\section{Kinetic limit}
\label{sec:kinetic}
In this section, differently from what we have done in the previous section, we will investigate what happens when the number of agents goes to infinity preserving the percentage of controlled particles. Therefore the number of controlled agents will not remain finite but goes to infinity as the non controlled ones. To this end we will use a Boltzmann approach.
Let us consider $\alpha$ as a characteristic time of interaction, supposing that only two agents $x$ and $y$ are interacting with each other, we have that the state $\hat x$ of the particle $x$ after the interaction with the particle $y$ is an affine function of the controls
\begin{equation}
\label{Kinetic_inter}
\small
\begin{aligned}
\hat x=x+\alpha H(x-y)(1-\Theta)(1-\Theta^*)+\alpha\sum_{\ell=1}^d \Big[G^\ell(x-y)(u_{\alpha}^{*})_\ell(1-\Theta)\Theta^*+\vecu_\alpha\Theta\Big]\,,
\end{aligned}
\end{equation}
where $\vecu_\alpha$ and $\vecu^*_\alpha$ are measuarable functions of time, $\Theta$ is a random variable which characterizes the probability that the agent $x$ is controlled through the control $\vecu_\alpha$ and $\Theta^*$ a random variable which characterizes the probability that the agent $y$ is controlled via the control $\vecu^*_\alpha$. More precisely their law is a Bernoulli one, so that we have
$$
\mathcal{P}\{\Theta=1\}=p\in[0,1]\qquad\mathcal{P}\{\Theta^*=1\}=p\in[0,1].
$$
Thus we now show which is the continuity equation that the density of agents should satisfy, starting from a dynamics affine in the controls.\\
Consider a kinetic model for the evolution of the density $\mu=\mu(x,t)$ of agents, with $x\in\mathbb{R}^d$ at time $t$, ruled by the following Boltzmann-type equation
\begin{equation}
\label{Boltzman1}
\partial_t \mu(x,t)=\mathbb{Q}_\alpha(\mu,\mu)(x,t)\,,
\end{equation}
where $\mathbb{Q}_\alpha$ is an interaction operator which accounts the gain and loss of agents in position $x$.
\begin{equation}
\label{interaction_kernel}
\mathbb{Q}_\alpha=\eta\Big\langle\int_{\mathbb{R}^d} \frac{1}{\mathcal{J}_\alpha}\mu(\hat x,t)\mu(\hat y,t)-\mu(x,t)\mu(y,t)\,dy\Big\rangle\,,
\end{equation}
where $\mathcal{J}_\alpha$ represents the Jacobian of the transformation $(x,y)\to(\hat x,\hat y)$.\\
In order to state the main result, namely performing the quasi invariant limit for interaction strength low and frequency high, for a binary dynamics affine in the controls, we give the following definition
\begin{definition}
The density $\mu$ is called weak solution of the Boltzmann equation \eqref{Boltzman1}, with initial datum $\mu_0\in\mathcal{P}_1(\mathbb{R}^d)$ if $\mu\in L^2([0,T],\mathcal{P}_1(\mathbb{R}^d))$ such that $\mu(x,0)=\mu_0$ and $\mu$ satisfies the weak form of \eqref{Boltzman1}, i.e
$$
\frac{d}{dt}\bigl\langle\mu,\varphi\bigr\rangle=\bigl\langle\mathbb{Q}_\alpha(\mu,\mu),\varphi\bigr\rangle\,,
$$
for all $t\in[0,T]$ and $\varphi\in C^\infty_C(\mathbb{R}^d,\mathbb{R})$ and 
where
$$
\bigl\langle\mathbb{Q}_\alpha(\mu,\mu),\varphi\bigr\rangle=\Big\langle\int_{\mathbb{R}^d}\int_{\mathbb{R}^d}(\varphi(\hat x)-\varphi(x))\mu(x)\mu(y)\,dx\,dy\Big\rangle.
$$
\end{definition}
We are now ready to state the main result, namely to prove which is the infinite dimensional equation that the density of agents satisfy when the interaction strength is low and the frequency is high
\begin{theorem}
\label{K_limit}
Let us fix controls $\vecu_\alpha$ and $\vecu^*_\alpha$, measurable functions of time which stay in a compact set of $\mathbb{R}^{md}$ and suppose that
\begin{align*}
&\lim_{\alpha\to 0} \vecu_\alpha(t)=\bar\vecu(t)\,,\\
&\lim_{\alpha\to 0} \vecu^*_\alpha(t)=\bar\vecu^*(t).
\end{align*} 
Consider a weak solution $\mu$ of \eqref{Boltzman1} with initial datum $\mu_0(x)$. Thus introducing the following scaling
$$
\alpha=\epsilon\qquad \eta=\frac{1}{\epsilon}
$$
for the binary interaction \eqref{Kinetic_inter}, and defining by $\mu^\epsilon(x,t)$ a solution for the scaled equation \eqref{Boltzman1}, for $\epsilon\to 0$ $\mu^\epsilon(x,t)$ converges pointwise, up to a subsequence, to $\mu(x,t)$ where $\mu$ satisfies
\begin{equation}
\label{PDE}
\partial_t \mu+\Div\Big[\Big(\int_{\mathbb{R}^3}\Big\langle K(x,y,\bar\vecu,\bar\vecu^*)\Big\rangle \mu(y,t)\,dy\Big)\mu\Big]=0\,,
\end{equation}
%\marginpar{\textcolor{red}{\footnotesize{ $\vecu$ dipende o no da x? Se no, come faccio a distinguere $\vecu$ da $\vecu^*$?}}}
where 
\begin{equation}
\label{K}
\Big\langle K(x,y,\bar\vecu,\bar\vecu^*)\Big\rangle =(1-p)^2H(x-y)+p(1-p)\sum_{\ell=1}^dG^\ell(x-y)\bar u_{\ell}^{*}+\bar\vecu p\,.
\end{equation}
\end{theorem}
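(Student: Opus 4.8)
The plan is to pass to the limit directly in the weak form of the scaled Boltzmann equation, performing a quasi-invariant expansion in the small parameter $\epsilon$. Fix a test function $\varphi\in C^\infty_C(\mathbb{R}^d,\mathbb{R})$; with the scaling $\alpha=\epsilon$ and $\eta=1/\epsilon$ the weak formulation becomes
$$
\frac{d}{dt}\langle\mu^\epsilon,\varphi\rangle=\frac{1}{\epsilon}\Big\langle\int_{\mathbb{R}^d}\int_{\mathbb{R}^d}\bigl(\varphi(\hat x)-\varphi(x)\bigr)\mu^\epsilon(x)\mu^\epsilon(y)\,dx\,dy\Big\rangle\,,
$$
where the outer brackets denote expectation with respect to $\Theta,\Theta^*$ and the displacement $\hat x-x$ given by \eqref{Kinetic_inter} is of order $\epsilon$. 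First I would Taylor expand
$$
\varphi(\hat x)-\varphi(x)=\nabla\varphi(x)\cdot(\hat x-x)+\tfrac{1}{2}(\hat x-x)^\top D^2\varphi(\xi)(\hat x-x)\,,
$$
for some $\xi$ on the segment joining $x$ and $\hat x$, and substitute the explicit expression for $\hat x-x$.

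The second step is to take the expectation over the Bernoulli variables. Using their independence and $\mathcal{P}\{\Theta=1\}=\mathcal{P}\{\Theta^*=1\}=p$, the linear term factorizes as
$$
\big\langle\hat x-x\big\rangle=\epsilon\Big[(1-p)^2H(x-y)+p(1-p)\sum_{\ell=1}^dG^\ell(x-y)(u^*_\alpha)_\ell+p\,\vecu_\alpha\Big]\,.
$$
Dividing by $\epsilon$ and letting $\alpha=\epsilon\to0$, so that $\vecu_\alpha\to\bar\vecu$ and $\vecu^*_\alpha\to\bar\vecu^*$, the bracket converges to the kernel $\langle K(x,y,\bar\vecu,\bar\vecu^*)\rangle$ of \eqref{K}. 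Hence the leading contribution tends to
$$
\int_{\mathbb{R}^d}\int_{\mathbb{R}^d}\nabla\varphi(x)\cdot\langle K(x,y,\bar\vecu,\bar\vecu^*)\rangle\,\mu(x)\mu(y)\,dx\,dy\,,
$$
which is exactly the weak form of the divergence term in \eqref{PDE}.

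Next I would dispose of the quadratic remainder. Since $H$ and $G^\ell$ are bounded and the controls stay in a fixed compact set, we have $|\hat x-x|\le C\epsilon$ uniformly in $x,y$; together with $\|D^2\varphi\|_\infty<\infty$ this makes the remainder of size $O(\epsilon^2)$ pointwise, so after division by $\epsilon$ and integration against the probability measures $\mu^\epsilon(x)\mu^\epsilon(y)$ it is $O(\epsilon)$ and vanishes in the limit. Combining the three steps, for every $\varphi\in C^\infty_C$ the limit density $\mu$ satisfies $\frac{d}{dt}\langle\mu,\varphi\rangle=\int_{\mathbb{R}^d}\int_{\mathbb{R}^d}\nabla\varphi(x)\cdot\langle K\rangle\,\mu(x)\mu(y)\,dx\,dy$, which is the weak form of \eqref{PDE}.

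The main obstacle is the functional-analytic justification of the two passages to the limit, rather than the expansion itself. One must first secure uniform-in-$\epsilon$ a priori estimates — conservation of mass together with a bound on the first moment and equicontinuity in time of $t\mapsto\mu^\epsilon(t)$ — in order to extract, by a compactness (Ascoli--Arzel\'a / tightness) argument, a subsequence with $\mu^\epsilon\to\mu$ in $\mathcal{W}_1$ and the product measures $\mu^\epsilon\otimes\mu^\epsilon$ converging narrowly to $\mu\otimes\mu$. One then uses this convergence, the boundedness and continuity of $H,G^\ell$, and the uniform convergence $\vecu_\alpha\to\bar\vecu$, $\vecu^*_\alpha\to\bar\vecu^*$, to pass the limit inside the double integral of the quadratically nonlinear leading term; the fact that $\nabla\varphi$ has compact support in $x$ and the kernel is bounded makes the integrand bounded and continuous, so narrow convergence suffices there. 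Identifying the resulting weak identity with the continuity equation \eqref{PDE} then completes the argument.
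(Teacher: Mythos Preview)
Your proposal is correct and follows essentially the same route as the paper: write the weak form of the scaled Boltzmann equation, Taylor expand $\varphi(\hat x)-\varphi(x)$, take the Bernoulli expectation to obtain the kernel $\langle K\rangle$, and show that the Hessian remainder is $O(\epsilon)$ after division by $\epsilon$ and hence vanishes. If anything, you are more explicit than the paper, which does not spell out the computation of the Bernoulli expectations yielding the $(1-p)^2$, $p(1-p)$, $p$ coefficients, and which simply asserts the limit without discussing the compactness argument you outline in your final paragraph.
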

\begin{proof}
The proof follows the ideas of the one in \cite{AlbiFornasier16} but now the binary dynamics is a bit different, i.e. it is an affine function of the controls.\\
Since $\mu=\mu(x,t)$ is a weak solution of \eqref{Boltzman1}, pick a test function $\varphi\in C^\infty_c(\mathbb{R}^d,\mathbb{R})$ we want to compute
\begin{equation}
\label{weak_eq}
%\begin{aligned}
\frac{d}{dt}\int_{\mathbb{R}^d}\varphi(x(t))\mu(x(t),t)\,dx=\frac{1}{\alpha}\Big\langle\int_{\mathbb{R}^d}\int_{\mathbb{R}^d}(\varphi(\hat x)-\varphi(x))\mu(x,t)\mu(y,t)\,dx\,dy\Big\rangle\,.
%\end{aligned}
\end{equation}
%Considering the interaction time $\Delta t$, this can be expressed with the following interaction kernel
%$$
%\frac{1}{\Delta t}\Big\langle\int_{\mathbb{R}^d}\int_{\mathbb{R}^d}(\varphi(x^{'})-\varphi(x))\mu_{\Delta t}(x)\mu_{\Delta t}(y)\,dx\,dy\Big\rangle
%$$
For $\alpha=\epsilon$ small, using Taylor expansion we have
\begin{equation}
\label{phi_diff}
\varphi(\hat x)-\varphi(x)=\nabla\varphi(x) (\hat x-x)+\frac{1}{2}\mathscr{H}(x)(\hat x-x)\cdot(\hat x-x)+o(|\hat x-x|^2)\,.
\end{equation}
Taking $\bar\vecu$ and $\bar\vecu^*$ in a compact set, using the boundedness of the vector field in \eqref{Kinetic_inter} and substituting \eqref{phi_diff} in \eqref{weak_eq}, we can neglect the term $\frac{1}{2}\mathscr{H}(x)(\hat x-x)\cdot(\hat x-x)$ as $\epsilon\to 0$ since it is bounded and, in \eqref{weak_eq}, is of order $\epsilon$, which goes to zero as $\epsilon\to 0$. Therefore we obtain
\begin{equation}
\small
\frac{d}{dt}\int_{\mathbb{R}^d}\varphi(x(t))\mu(x(t),t)\,dx=\int_{\mathbb{R}^d}\nabla\varphi(x)\cdot\Big[\int_{\mathbb{R}^d}\Big\langle K(x,y,\bar\vecu,\bar\vecu^*)\Big\rangle \mu(y,t)\,dy\Big]\mu(x,t)\,dx\,,
\end{equation}
where is the right hand side of \eqref{Kinetic_inter}, i.e.
$$
\begin{aligned}
K(x,y,\vecu,\vecu^*)=H(x-y)(1-\Theta)(1-\Theta^*)+\sum_{\ell=1}^dG^\ell(x-y)\bar u_{\ell}^{*}(1-\Theta)\Theta^*+\bar\vecu\Theta\,.
\end{aligned}
$$
This equation turns out to be the weak formulation of
\begin{equation}
%\label{PDE}
\partial_t \mu+\Div\Big[\Big(\int_{\mathbb{R}^d}\Big\langle K(x,y,\bar\vecu,\bar\vecu^*)\Big\rangle \mu(y,t)\,dy\Big)\mu\Big]=0.
\end{equation}
%Now calling 
%\begin{equation}
%\label{a_b}
%\begin{aligned}
%&a(x,x^*)=\frac{\vecz}{2a+||x-x^*||}+\frac{\vecz\cdot(x-x^*)}{(2a+||x-x^*||)^3}(x-x^*)\\
%&b(x,x^*,\vecu^*)=\frac{\vecu^*}{2a+||x-x^*||}+\frac{\vecu^*\cdot(x-x^*)}{(2a+||x-x^*||)^3}(x-x^*)
%\end{aligned}
%\end{equation}
%we have that
%$$
%\Big\langle K(x,y,\vecu(x),\vecu(y))\Big\rangle =\frac{4}{3}\vecz+(1-p)^2a(x,y)+p(1-p)b(x,y,\vecu(y))+\vecu(x) p
%$$
\end{proof}
\begin{remark}
Note that splitting \eqref{PDE}, it reads also
\begin{equation}
\label{PDE2}
%\begin{aligned}
\partial_t \mu+(1-p)^2\Div\bigl[(H*\mu)\mu\bigr]+
p(1-p)\sum_{\ell=1}^du_{\ell}^{*}\Div\bigl[(G^{\ell}*\mu)\mu\bigr]+p\Div(\vecu \mu) =0.
%\end{aligned}
\end{equation}
\end{remark}

\subsection{The optimal control problem: binary control and Boltzmann approach}
In this section, starting from the finite dimensional solution of an optimal control problem, derive the infinite dimensional one, exploiting a Boltzmann approach. We will give a sub optimal solution to the infinite dimensional optimal control problem starting from the optimal solution of the finite dimensional one
\subsubsection{The binary optimal control}
%\subsection{Binary sparse control and Boltzmann approach}
Let us consider the evolution equation \eqref{ODE_N}, with the control variables $\vecu$ with values in a compact $U\subset\mathbb{R}^{dm}$. The controllers are obtained as the
solution of the following optimal control problem
\begin{equation}
\label{opt_finite_1}
\min_{\vecu\in L^\infty([0,T],\mathcal{U})}\mathcal{J}(\vecu,X_0)=\int_0^Te^{-\lambda t}\mathscr{L}(X(t),\vecu(t))\,dt\,,
\end{equation}
with
$$
\mathscr{L}(X(t),\vecu(t))=||\bar{X}-X(t)||_2^2+\gamma||\vecu||_2^2\,,
$$
with $\bar X$ a desired fixed configuration.\\
We first focus our analysis on the optimal control problem when $N=2$, so there are only two agents $i$ and $j$. We shall assume that system dynamics have been discretized in time in intervals $\Delta t=[t_k,t_{k+1}]$ with a first-order approximation

\begin{equation}
%\footnotesize
\begin{cases}
X_i^{k+1}=X_i^k+\frac{\Delta t}{2}\Big(H(X^k_i-X^k_j)(1-\Theta_i)(1-\Theta_j)+\\
\phantom{X_i^{k+1}=}\Theta_j(1-\Theta_i)\displaystyle{\sum_{\ell=1}^d} G^\ell(X^k_i-X^k_j)u_{j\,\ell}^{k}+\vecu_i^k\Theta_i\Big)\\
X_j^{k+1}=X_j^k+\frac{\Delta t}{2}\Big(H(X^k_j-X^k_i)(1-\Theta_j)(1-\Theta_i)+\\
\phantom{X_j^{k+1}=}\Theta_i(1-\Theta_j)\displaystyle{\sum_{\ell=1}^d }G^\ell(X^k_j-X^k_i)u_{i\,\ell}^{k}+\vecu_j^k\Theta_j\Big)\,,
\end{cases}
\end{equation}
where $H$ and $G^\ell$ $\ell=1,\cdots d$ satisfy assumption $(H)$, $\Theta_i$ is a Bernoulli random variable which describes the probability that the agent $i$ is controlled and we have assumed that the control is piecewise constant in each time interval $[t_k,t_{k+1}]$, i.e $\vecu_i(t)=\sum_{k=1}^{N_T}\vecu_i^k\chi_{[t_k,t_{k+1}]}$.\\
In this setting the discretized functional becomes
\begin{equation}
\min_{\vecu_{ij}1\in L^\infty([0,T],\mathcal{U})}\mathcal{J}_{\Delta t}(\vecu_{ij},X^0_{ij})=\min_{\vecu_{ij}1\in L^\infty([0,T],\mathcal{U})}\sum_{k=0}^{N_T}\beta^k\mathscr{L}(X^k_{ij},\vecu_{ij}^k)\,.
\end{equation}
The strategy is now to find the values of $\vecu_{ij}^k=(\vecu_i^k,\vecu_j^k)$ in each time interval solving an instantaneous optimal control problem. For example in the time interval $[0,t_1]$
%The instantaneous control corresponds to the shortest nontrivial prediction horizon, i.e. $N_T=1$.
%In order to simplify calculation let us suppose at first that our control functions are $\vecu_i=(0,0,u^3_i)$ and 
% $\vecu_j=(0,0,u^3_j)$.
the instantaneous control problem is further simplified to minimizing the functional
$$
\mathcal{J}_{\Delta t}(\vecu_{ij},X^0_{ij})=\frac{\beta}{2}||\bar{X}-X_{ij}^1||_2^2+\gamma||\vecu^0||_2^2\,,
$$
with $X_{ij}^1$ given by
\begin{equation}
\label{discrete_evol}
%\footnotesize
\begin{cases}
X_i^1=X_i^0+\frac{\Delta t}{2}\Big(H(X^0_i-X^0_j)(1-\Theta_i)(1-\Theta_j)+\\
\phantom{X_i^1=}\Theta_j(1-\Theta_i)\displaystyle{\sum_{\ell=1}^d}G^\ell(X^0_i-X^0_j)u_{j\,\ell}^{0}+\vecu_i^0\Theta_i\Big)\\
X_j^1=X_j^0+\frac{\Delta t}{2}\Big(H(X^0_j-X^0_i)(1-\Theta_j)(1-\Theta_i)+\\
\phantom{X_j^1=}\Theta_i(1-\Theta_j)\displaystyle{\sum_{\ell=1}^d}G^\ell(X^0_j-X^0_i)u_{i\,\ell}^{0}+\vecu_j^0\Theta_j\Big)
\end{cases}
\end{equation}
Since $X_{ij}^1$ depend linearly on $(\vecu_i^0,\vecu_j^0)$ the functional is clearly convex and the optimal control is the solution of the following linear system
$$
\vecD_{\Delta t} \begin{pmatrix}\vecu_i^0\\\vecu_j^0\end{pmatrix}=\vecC_{\Delta t}\,,%+\gamma\begin{pmatrix}
%\mathrm{sign} (u^3_i)\\\mathrm{sign}( u^3_j )
%\end{pmatrix}
$$
where $\vecD_{\Delta t}$ is a $2d\times 2d$ square matrix and $\vecC_{\Delta t}$ a vector in $\mathbb{R}^{2d}$.\\
%$$
%\vecC_{\Delta t}=\begin{pmatrix}\sum_{r=1}^3\Big[\Big(\hat{X}_1^r-X^{r,0}_i-\Delta t(\frac{4}{3}z^r+H^r(1-\Theta_i)(1-\Theta_j))\Big)(\Delta t\Theta_i)+\\
%+\Big(\hat{X}_2^r-X^{r,0}_j-\Delta t(\frac{4}{3}z^r+H^r(1-\Theta_j)(1-\Theta_i))\Big)(\Delta t\Theta_i(1-\Theta_j)G^{3,r})\Big]\\\\
%\sum_{r=1}^3\Big[\Big(\hat{X}_1^r-X^{r,0}_i-\Delta t(\frac{4}{3}z^r+H^r(1-\Theta_i)(1-\Theta_j))\Big)(\Delta t \Theta_j(1-\theta_i)G^{3,r})+\\
%+\Big(\hat{X}_2^r-X^{r,0}_j-\Delta t(\frac{4}{3}z^r+H^r(1-\Theta_j)(1-\Theta_i))\Big)(\Delta t\Theta_j)\Big]
%\end{pmatrix}
%$$
%and
%$$
%\vecD_{\Delta t}=\begin{pmatrix}
%\sum_{r=1}^3\Big(\Delta t^2\Theta_i^2(1+(1-\Theta_j)^2(G^{3,r})^2)\Big)+\gamma&\sum_{r=1}^3\Delta t^2\Theta_i\Theta_j G^{3,r}((1-\Theta_i)+(1-\Theta_j))\\\sum_{r=1}^3\Delta t^2\Theta_i\Theta_j G^{3,r}((1-\Theta_i)+(1-\Theta_j))& \sum_{r=1}^3\Big(\Delta t^2 \Theta_j^2((1-\Theta_i)^2(G^{3,r})^2+1)\Big)+\gamma
%\end{pmatrix}
%$$
The solution $\vecU^{*^0}_{\Delta t}=(\vecu_i^0,\vecu_j^0)$ is %calling $\xi_i=\hat{X}_1^r-X^{r,0}_i-\Delta t(\frac{4}{3}z^r+H^r(1-\Theta_i)(1-\Theta_j))$, $\xi_j=\hat{X}_2^r-X^{r,0}_j-\Delta t(\frac{4}{3}z^r+H^r(1-\Theta_j)(1-\Theta_i))$ and $\alpha=-1+G^{is
\begin{align}
\label{opt_inst_control}
%\small
%&\begin{cases}
\begin{pmatrix}\vecu_i^0\\\vecu_j^0\end{pmatrix}=\Pi_\mathcal{U}\Big((\vecD_{\Delta t})^{-1}\vecC_{\Delta t}\Big)\,,%+\begin{pmatrix}\gamma\\\gamma\end{pmatrix}\Big)\Bigg)\\
%\text{if } \mathrm{sign}\Big[(\vecD_{\Delta t})^{-1}\Big(\vecC_{\Delta t}+\begin{pmatrix}\gamma\\\gamma\end{pmatrix}\Big)\Big]=\begin{pmatrix}1\\1\end{pmatrix}
%\end{cases}&&\begin{cases}
%\begin{pmatrix}u^3_i\\u^3_j\end{pmatrix}=\Pi_U\Bigg((\vecD_{\Delta t})^{-1}\Big(\vecC_{\Delta t}+\begin{pmatrix}\gamma\\-\gamma\end{pmatrix}\Big)\Bigg)\\
%\text{if } \mathrm{sign}\Big[(\vecD_{\Delta t})^{-1}\Big(\vecC_{\Delta t}+\begin{pmatrix}\gamma\\-\gamma\end{pmatrix}\Big)\Big]=\begin{pmatrix}1\\-1\end{pmatrix}
%\end{cases}\\
%&\begin{cases}
%\begin{pmatrix}u^3_i\\u^3_j\end{pmatrix}=\Pi_U\Bigg((\vecD_{\Delta t})^{-1}\Big(\vecC_{\Delta t}+\begin{pmatrix}-\gamma\\\gamma\end{pmatrix}\Big)\Bigg)\\
%\text{if } \mathrm{sign}\Big[(\vecD_{\Delta t})^{-1}\Big(\vecC_{\Delta t}+\begin{pmatrix}-\gamma\\\gamma\end{pmatrix}\Big)\Big]=\begin{pmatrix}-1\\1\end{pmatrix}
%\end{cases}&&\begin{cases}
%\begin{pmatrix}u^3_i\\u^3_j\end{pmatrix}=\Pi_U\Bigg((\vecD_{\Delta t})^{-1}\Big(\vecC_{\Delta t}+\begin{pmatrix}-\gamma\\-\gamma\end{pmatrix}\Big)\Bigg)\\
%\text{if } \mathrm{sign}\Big(\vecD_{\Delta t})^{-1}\Big(\vecC_{\Delta t}+\begin{pmatrix}\gamma\\-\gamma\end{pmatrix}\Big)\Big]=\begin{pmatrix}-1\\-1\end{pmatrix}
%\end{cases}
\end{align}
where $\Pi_\mathcal{U}$ is the projection of the compact $\mathcal{U}$. Iterating his procedure in each time interval we generate an optimal control in feedback form, i.e. at a given discrete instant $k$, the instantaneous optimal action is a non linear mapping only depending on the current state $X^k$ and model parameters.
\subsubsection{The Boltzmann approach for the infinite dimensional optimal control}
For a large ensemble of agents, the microscopic optimal control problem is well-approximated by the following mean-field optimal control problem,
\begin{equation}
\min_{\vecu\in L^\infty([0,T],\mathcal{U})}J(\mu,\vecu):=\min_{\vecu\in L^\infty([0,T],\mathcal{U})}\int_0^T\int_{\mathbb{R}^d}e^{-\lambda t}\mathcal{L}(X,t, \mu(X,t),\vecu(X,t))\,d\mu(X,t)\,dt\,,
\end{equation}
constrained to the mean-field multi-agent dynamical system for the agents' density distribution $\mu$
\begin{equation}
\label{PDE_mu}
\partial_t \mu+\Div\Big[\Big(\int_{\mathbb{R}^d}\Big\langle K(X,Y,\vecu(X),\vecu(Y))\Big\rangle \mu(Y,t)\,dY\Big)\mu\Big]=0.
\end{equation}
with 
$$
\Big\langle K(X,Y,\vecu,\vecu^*)\Big\rangle =(1-p)^2H(X,Y)+p(1-p)\sum_{\ell=1}^d G^\ell(X,Y)u^{*}_{\ell}+\vecu p\,,
$$
where $p$ is the probability that the particle $X$ or $Y$ is controlled. % and the functions $a(X,Y)$ and $b(X,Y,\vecu(Y))$ given in \eqref{a_b}. 
 The mean-field functional $J(\mu,\vecu)$ is defined accordingly to the finite dimensional cost $\mathscr{L}$ in this way:
\begin{equation}
\label{opt_infinite_1}
J(\mu,\vecu):=\int_0^T\Bigg(\int_{\mathbb{R}^d}||\bar{X}-X||_2^2\mu(X,t)\,dX+\int_{\mathbb{R}^d}\gamma||\vecu||_2^2\mu(X,t)\,dX\Bigg)\,dt\,,
\end{equation}
where $\bar X$ is the desired fixed configuration.\\
We propose a sub-optimal solution to this problem using a Boltzmann-type equation to model the evolution of a system of agents ruled by a binary interactions.\\ 
For $\mu=\mu(X,t)$ denoting the kinetic probability density of agents in position $X\in\mathbb{R}^d$ at time $t>0$, the time evolution of the density  $\mu$ is given as a balance between bilinear gains and losses of the agents' position, due to the following constrained binary interaction,
\begin{equation}
\label{opt_feedback_dyn}
%\small
\begin{cases}
X^*=X+\alpha\Big(H(X-Y)(1-\Theta_X)(1-\Theta_Y)+\\
\phantom{X^*=}\Theta_Y(1-\Theta_X)\displaystyle{\sum_{\ell=1}^d}G^\ell(X-Y)U^{Y*}_{\alpha_\ell}(Y,X)+\vecU^{X*}_\alpha(X,Y)\Theta_X\Big)\\
Y^*=Y+\alpha\Big(H(Y-X)(1-\Theta_Y)(1-\Theta_X)+\\
\phantom{Y^*=}\Theta_X(1-\Theta_Y)\displaystyle{\sum_{\ell=1}^d}G^\ell(Y-X)U^{X*}_{\alpha_\ell}(X,Y)+\vecU^{Y*}_\alpha(Y,X)\Theta_Y\Big)\,,
\end{cases}
\end{equation}
where $(X^*,Y^*)$ are the post-interaction states, the parameter $\alpha$ measures the strength of the interaction and the feedback $\vecU^{i*}_\alpha(X,Y)$ for $i=X,Y$, indicates the control of the dynamics. For $\alpha=\frac{\Delta t}{2}$ and $\vecU^{i*}_\alpha(X^1_i,X^1_j)$ given by \eqref{opt_inst_control} the resulting dynamics is equivalent to the expression \eqref{discrete_evol}.\\
We now proceed similarly to Theorem \ref{K_limit}, making the quasi-invariant limit supposing that the density $\mu$ satisfies a Boltzmann-type equation and the agents dynamic is the one in \eqref{opt_feedback_dyn} considering a regime where interaction strength is low and frequency high.
More precisely we consider that  the density $\mu$ satisfies the Boltzmann type equation \eqref{Boltzman1}
%\begin{equation}
%\label{Boltzmann}
%\partial_t\mu=Q_\alpha(\mu,\mu)(X,t)
%\end{equation}
with the interaction operator $\mathbb{Q}_\alpha$ given by \eqref{interaction_kernel}
%$$
%Q_\alpha(\mu,\mu)(X)=\eta\int_{\mathbb{R}^3}\Bigg(\frac{1}{\mathbb{J}_\alpha}\mu(X^*)\mu(Y^*)-\mu(X)\mu(Y)\Bigg)\,dy
%$$
where $\mathbb{J}_\alpha$ represents the Jacobian of the transformation $(X,Y)\to(X^*,Y^*)$ given in \eqref{opt_feedback_dyn}.
\begin{theorem}
For $\alpha\geq0$ and $t\geq 0$, assume that $H(\cdot)$ and $G(\cdot)\in L^2_{loc}$ and for $\alpha\to 0$ we assume that $\vecU^{i*}_\alpha(X,Y)\to K^i(X,Y)$ for $i=1,2$. Then we consider a weak solution $\mu$ of the Boltzmann-type equation \eqref{Boltzman1} with initial datum $\mu_0(X)$. Setting $\alpha=\epsilon$ and $\eta=\frac{1}{\epsilon}$ for the binary interaction \eqref{opt_feedback_dyn}, and defining $\mu^\epsilon(X,t)$ a solution for the scaled equation \eqref{Boltzman1}, when $\epsilon\to 0$, $\mu^\epsilon(X,t)$ converges point-wise, up to a subsequence, to $\mu(X,t)$ satisfying the following nonlinear equation
\begin{equation}
\label{PDE3}
\partial_t \mu+\Div\Big[\Big(\int_{\mathbb{R}^d}\Big\langle K(X,Y,K^1(X,Y),K^2(X,Y))\Big\rangle \mu(Y,t)\,dY\Big)\mu\Big]=0\,,
\end{equation}
with 
$$
\begin{aligned}
K(X,Y,K^1(X,Y),&K^2(X,Y))=H(X-Y)(1-\Theta_X)(1-\Theta_Y)\\
&+\sum_{\ell=1}^dG^\ell(X-Y)K^{2}_{\ell}(X,Y)(1-\Theta_X)\Theta_Y+K^1(X,Y)\Theta_X\,.
\end{aligned}
$$
\end{theorem}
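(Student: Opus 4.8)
The plan is to reproduce the quasi-invariant (grazing) limit argument of Theorem \ref{K_limit}, the only genuine novelty being that the controls are now the state-dependent feedback laws $\vecU^{X*}_\alpha(X,Y)$ and $\vecU^{Y*}_\alpha(X,Y)$ rather than the purely time-dependent controls used there. First I would write the weak form of the scaled Boltzmann equation \eqref{Boltzman1}: for a test function $\varphi\in C^\infty_c(\mathbb{R}^d,\mathbb{R})$ and the scaling $\alpha=\epsilon$, $\eta=1/\epsilon$,
$$
\frac{d}{dt}\int_{\mathbb{R}^d}\varphi(X)\mu^\epsilon(X,t)\,dX=\frac{1}{\epsilon}\Big\langle\int_{\mathbb{R}^d}\int_{\mathbb{R}^d}(\varphi(X^*)-\varphi(X))\mu^\epsilon(X,t)\mu^\epsilon(Y,t)\,dX\,dY\Big\rangle,
$$
and then Taylor-expand $\varphi(X^*)-\varphi(X)$ to second order around $X$, substituting the post-interaction increment $X^*-X$ read off from \eqref{opt_feedback_dyn}, which equals $\alpha=\epsilon$ times the interaction kernel $K$.

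The decisive point is the separation of orders. Since $X^*-X=O(\epsilon)$, the first-order term $\tfrac1\epsilon\nabla\varphi(X)\cdot(X^*-X)$ is $O(1)$ and produces the limiting drift, whereas the Hessian term $\tfrac{1}{2\epsilon}\mathscr{H}(X)(X^*-X)\cdot(X^*-X)$ carries an overall factor $\epsilon$ and should vanish in the limit. Here I would exploit the hypothesis $H,G^\ell\in L^2_{loc}$: the squared increment $|X^*-X|^2$ is controlled by $|H(X-Y)|^2$, $|G^\ell(X-Y)|^2$ and the bounded projected feedbacks $\Pi_\mathcal{U}(\cdot)$, all of which are integrable against the compactly supported product measure $\mu^\epsilon\otimes\mu^\epsilon$ once equi-compact support of $\mu^\epsilon$ is established. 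Hence the remainder is bounded uniformly in $\epsilon$ and the prefactor kills it.

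To pass to the limit $\epsilon\to0$ I would combine the pointwise convergence $\mu^\epsilon\to\mu$ (up to a subsequence), the narrow convergence of the product measures, and the assumed feedback convergence $\vecU^{i*}_\alpha\to K^i$, invoking dominated convergence (again justified by the $L^2_{loc}$ bounds and the compactness of $\mathcal{U}$) to move the limit inside the double integral. Taking finally the expectation $\langle\cdot\rangle$ over the independent Bernoulli variables $\Theta_X,\Theta_Y$ collapses the products $(1-\Theta_X)(1-\Theta_Y)$, $(1-\Theta_X)\Theta_Y$, $\Theta_X$ into the weights $(1-p)^2$, $p(1-p)$, $p$, and the resulting identity is recognized as the weak formulation of \eqref{PDE3}.

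The main obstacle, and the only place where this proof departs substantially from Theorem \ref{K_limit}, is precisely the remainder control under the weaker $L^2_{loc}$ regularity of the kernels instead of the boundedness used there: one must verify that the $1/\epsilon$ frequency scaling does not interact badly with the merely local square-integrability of $H$ and $G^\ell$. This reduces to establishing equi-compact support of the densities $\mu^\epsilon$, so that every integral appearing in the expansion stays finite and dominated convergence applies; the feedback convergence $\vecU^{i*}_\alpha\to K^i$ then enters only through a routine uniform-integrability argument.
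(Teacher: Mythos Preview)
Your proposal is correct and follows essentially the same route as the paper: the paper's own proof is a one-line reference back to Theorem~\ref{K_limit} (and to \cite{AlbiFornasier16}), and your outline reproduces precisely that argument---weak formulation, Taylor expansion of $\varphi(X^*)-\varphi(X)$, identification of the $O(1)$ drift term, and suppression of the $O(\epsilon)$ Hessian remainder. Your treatment is in fact more detailed than the paper's, particularly in flagging the role of the $L^2_{loc}$ hypothesis for the remainder estimate and the need for equi-compact support; the paper does not spell these points out.
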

\begin{proof}
The proof is similar to the one of Theorem \ref{K_limit} in Section \ref{sec:kinetic} and follows the one in \cite{AlbiFornasier16}
\end{proof}
\noindent This is the suboptimal solution to the infinite dimensional optimal control problem.

\section{Comparison between the two approaches}
In this section we make some comparisons between the infinite dimensional dynamics obtained by the kinetic limit and the mean filed granular one. First of all note that in both cases the finite dimensional dynamics in the case $N=2$ is the same, therefore passing to the limit for the number of agents tending to infinity, the two infinite dimensional equations are similar. More precisely looking at  the PDE \eqref{PDE_mu}
$$
\partial_t \mu+\Div\Big[\Big(\int_{\mathbb{R}^d}\Big\langle K(X,Y,\vecu(X),\vecu(Y))\Big\rangle \mu(Y,t)\,dY\Big)\mu\Big]=0.
$$
with 
$$
\Big\langle K(X,Y,\vecu,\vecu^*)\Big\rangle =(1-p)^2H(X,Y)+p(1-p)\sum_{\ell=1}^d G^\ell(X,Y)u^{*}_{\ell}+\vecu p\,,
$$
and at the second PDE equation in \eqref{eq_mu}, namely 
$$
\partial_t\mu+\nabla\cdot\Bigl[\bigl(H*\mu+\sum_{\ell=1}^dG^\ell*\mu_{m\,\ell}\bigr)\mu\Bigr]=0\,.
$$
\noindent the fields of which we are making the divergence are very similar, they both contain the convolution of the density $\mu$ with the kernel $H$, which describes the non interacting dynamics. Moreover in them both there is the action of the kernel $G^\ell$ which describes the interaction between the controlled and non controlled agents.\\
However, we notice that despite this similarity it is not possible to recover one PDE starting from the other. Indeed if in the kinetic framework we consider to control only a finite number of agents independently from the total number, it is not possible to choose a suitable bernoulli parameter $p$ to describe this situation. Controlling a finite number of agents would mean to make $p$ tending to zero. Unfortunately in this limit the resulting PDE does not coincide with \eqref{eq_mu}, since the interaction term $G^\ell$ would disappear making the system not controlled.\\
Nevertheless the two approaches are both valid depending of the problem we are interested in. For example if in the finite dimensional system we are interested in controllability issues, i.e the possibility of finding controls that move the system between two fixed configurations, is reasonable to need a number of controlled agents which is proportional to their total number. If we want to preserve this property also in the limit, the kinetic approach surely would be the more suitable. In this situation, if the finite dimensional system is controllable when the number of controls is a fraction of the total number of agents we are sure, in the finite dimensional optimal control problem \eqref{opt_finite_1} 
with the current cost $\mathscr{L}(X(t),\vecu(t))=||\bar{X}-X(t)||_2^2+\gamma||\vecu||_2^2$, to find a control which allow us to reach the desired configuration $\bar X$. Using the kinetic limit we preserve the fraction of controlled agents keeping the controllability property, crucial to ensure that we are able to reach also in the infinite dimensional case the desired configuration $\bar X$. This kind of problems are typical for swarms of robots or animal collective dynamics.\\ On the contrary in situations in which in the finite dimensional system the same amount of controls are sufficient to control all the others or in situations in which is preferable to identify only a finite number of leaders to convince all the others to have a certain behavior, like in opinion dynamics or crowd motion, the mean field granular approach is more suitable. In this case in the infinite dimensional limit the number of controlled agents remains finite, of course we cannot ensure to be able to reach exactly the desired configuration but only to move towards it, however we use a little amount of control energy.

\section{Conclusions}
In this paper we studied an individual-based model for a multi agent system. The originality of the work is the fact that the individual dynamics is a nonlinear affine control system, where the controls are the velocities of the coordinates of some of the agents and the interaction force between controlled and non-controlled agents occurs through the velocity of the leader agents, like in systems of particles immersed in Stokes fluid. We analyze what happen when the number of agents goes to infinity and  we studied two different situations. The first one in which the number of controlled agents is kept fixed and finite, whereas the non-controlled or passive ones goes to infinity. The second instead where the overall number of agents goes to infinity preserving the percentage of controlled ones.  In both cases we perform a limit procedure, using a granular mean-field limit for the first situation and  a kinetic approach for the second one, and obtain a PDE for the evolution of the agents' density. 
Moreover in both cases we state the optimal control problem starting from the solution of the finite dimensional one and derive the corresponding first order optimality conditions.% \textcolor{blue}{There are many applications of this work like for example the optimal control of a swarm of micro-robots swimming in a Stokes fluid.}

%For acknowledgements section, please don't number the section, please begin it with \section*{Acknowledgements}
%\section*{Acknowledgments}Marta Zoppello is a member of the Gruppo Nazionale per la Fisica Matematica (GNFM) of the Istituto Nazionale di Alta Matematica (INdAM).
%She gratefully acknowledge support from the MIUR grant Dipartimenti di Eccellenza 2018-2022 (CUP: E11G18000350001).

% You may incorporate your references as follows in your main tex file.
% Using BibTex is not recommended but can be handled.

\end{document}